\newtheorem{Definition}{Definition}[section]
\newtheorem{Theorem}[Definition]{Theorem}
\newtheorem{Lemma}[Definition]{Lemma}
\newtheorem{Corollary}[Definition]{Corollary}
\newtheorem{Example}[Definition]{Example}
\newcommand{\lc}{\mathcal{L}}
\newcommand{\rc}{\mathcal{R}}
\newcommand{\hc}{\mathcal{H}}
\newcommand{\jc}{\mathcal{J}}
\title{\Large \bf Minimal bi-ideals in regular and completely regular ordered semigroups}
\author{ K. Hansda \\
\footnotesize{Department of Mathematics, Visva-Bharati
University,}\\
\footnotesize{Santiniketan-731235, West Bengal, India}\\
 \footnotesize{kalyan.hansda@visva-bharati.ac.in}}
\begin{document}
\date{}
\maketitle
\begin{abstract}{\footnotesize}
Here we  characterize regular and completely regular  ordered
semigroups by their minimal bi-ideals. A minimal bi-ideal is
expressed as a product of a minimal right ideal and a minimal left
ideal. Furthermore, we show that every bi-ideal in a completely
regular ordered semigroup is minimal and hence  a regular ordered
semigroup $S$ is completely regular if and only if $S$ is union its
of minimal bi-ideals.
\end{abstract}

{\it Key Words and phrases:} ordered semigroup, regular, completely
regular, bi-ideal, t-simple,  ordered idempotents, bi-simple.

{\it 2010 Mathematics subject Classification:} 20M10; 06F05.

\section{Introduction}
The notion of regularity in ordered semigroups is derived by  N.
Kehayopulu \cite{ke921}. As well as ring theory regularity plays a
noticeable role in ordered semigroups. T. Saito
\cite{Saito2}\cite{Saito3} studied systematically ordered regular,
completely regular ordered semigroups. Success attained by this
school characterizing regularity on ordered semigroups are either in
the semilattice and complete semilattice decompositions into
different types of simple components, viz. left, $t-$, $\sigma$,
$\lambda$-simple etc. or in its ideal theory.

Here our aim is to study regular and completely regular ordered
semigroups by minimality of their bi-ideals. N. Kehayopulu
\cite{ke92} introduced the notion of bi-ideal  in an ordered
semigroup. Mathematicians like Lee, Kang \cite{lt} and others
studied these type of ideals in various ways. Author \cite{bh2}
characterized  bi-ideals in Clifford and left Clifford ordered
semigroup. Cao and XU \cite{Cao00} described minimal and maximal
left ideals in ordered semigroup.  Xu and Ma \cite{XM} studied
minimality of bi-ideals in an ordered semigroup  and characterized
t-simplicity of ordered semigroups by  minimality of their
bi-ideals.

In this paper we use this technique of  minimality of bi-ideals  to
study the structure  of  completely regular ordered semigroups. Our
paper organized as follows. Basic definitions and some known reults
of ordered semigroups have been given  in Section 2. Section 3 is
devoted to characterizing the regular and completely regular ordered
semigroups by  minimality of their bi-ideals.

\section{Preliminaries:}
In this paper $\mathbb{N}$ will provide the set of all natural
numbers. An ordered semigroup is a partially ordered set $S$, and at
the same time a semigroup $(S, \cdot)$ such that $( \forall a , \;b
, \;x \in S ) \;a \leq b$ implies  $xa\leq xb \;\textrm{and} \;a x
\leq b x$. It is denoted by $(S,\cdot, \leq)$.

For an ordered semigroup $S$ and $H \subseteq S$, denote $$(H] :=
\{t \in S : t \leq  h, \;\textrm{for some} \;h\in  H\}.$$

Let $I$ be a nonempty subset of an ordered semigroup $S$. $I$ is a
left (right) ideal of $S$, if $SI \subseteq I \;( I S \subseteq I)$
and $(I]= I$. $I$ is an ideal of $S$ if it is both  left and  right
ideal of $S$. $S$ is left (right) simple if it has no non-trivial
proper left (right) ideal. Similarly we define simple ordered
semigroups. $S$ is called t-simple ordered semigroup if it is both
left and right simple. Due to Kehayopulu   an ordered semigroup $S$
is called an  regular \cite{ke921} (completely regular \cite{ke92} )
ordered semigroup if for every $a \in S, \;a \in (aSa] \;(a \in
(a^2Sa^2])$.

A subsemigroup  $B$ of $S$ is called a  bi-ideal \cite{ke92} if $
BSB \subseteq B$ and $(B]= B$.  The principal  left ideal, right
ideal \cite{Ke2006}, ideal and   bi-ideal \cite{ke92} generated by
$a \in S$ are denoted by $L(a), \;R(a), \;I(a), \;B(a)$
respectively. It is easy to check that
$$L(a)= (a \cup Sa], \;R(a)= (a\cup aS], \;I(a)= (a\cup Sa \cup aS \cup SaS] \;  and  \;B(a)=(a \cup a^2 \cup aSa],$$
and  if moreover  $a$ is  regular then $L(a)= (  Sa], \;R(a)= ( aS],
\;I(a)= ( Sa \cup aS \cup SaS]$ and $B(a)=( aSa]$.  Kehayopulu
\cite{Ke2006} defined Greens relations $\lc, \;\rc, \;\jc
\;\textrm{and} \;\hc$ on an ordered semigroup $S$ as follows:
$$ a \lc
b   \; if   \;L(a)= L(b), \; a \rc b   \; if   \;R(a)= R(b), \; a
\jc b \; if   \;I(a)= I(b), \;\textrm{and} \;\hc= \;\lc \cap
\;\rc.$$

These four relations  are equivalence relations on $S$.

\begin{Theorem}\cite{Cao02}\label{1}
An ordered semigroup $S$ is regular if and only if for every right
ideal $R$ and left ideal $L$ of $S, \;(RL]= R \cap L$.
\end{Theorem}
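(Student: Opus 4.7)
The plan is to prove both implications directly, using the principal right and left ideals $R(a)$ and $L(a)$ described earlier in the excerpt to handle the converse.

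For the forward direction, assume $S$ is regular. Given a right ideal $R$ and left ideal $L$, the inclusion $(RL] \subseteq R \cap L$ is immediate: $RL \subseteq RS \cap SL \subseteq R \cap L$, and since $(R] = R$ and $(L] = L$ are both downward closed, $(RL] \subseteq (R \cap L] = R \cap L$. For the reverse inclusion, take $a \in R \cap L$. By regularity, $a \leq asa$ for some $s \in S$. Then $as \in R$ (right ideal) and $a \in L$, so $asa \in RL$, hence $a \in (RL]$.

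For the converse, suppose $(RL] = R \cap L$ for every right ideal $R$ and left ideal $L$. Fix $a \in S$. I will apply the hypothesis to $R = R(a) = (a \cup aS]$ and $L = L(a) = (a \cup Sa]$; clearly $a \in R(a) \cap L(a)$, so $a \in (R(a) L(a)]$. The key computational step is to note that for any subsets $X, Y \subseteq S$ one has $(X](Y] \subseteq (XY]$ (if $x \leq x'$ and $y \leq y'$, then $xy \leq x'y'$ by compatibility of $\leq$ with multiplication), and $((H]] = (H]$. Expanding the product $(a \cup aS)(a \cup Sa) \subseteq a^2 \cup aSa$ and applying these facts gives
\[
a \in (R(a) L(a)] \subseteq (a^2 \cup aSa].
\]
Thus either $a \leq asa$ for some $s \in S$, in which case $a \in (aSa]$ directly, or $a \leq a^2$, in which case $a \leq a^2 \leq a \cdot a \cdot a \in aSa$, and again $a \in (aSa]$. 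Either way $S$ is regular.

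The main obstacle is really the case analysis at the end of the converse: the inclusion produced by the hypothesis only places $a$ in $(a^2 \cup aSa]$, not immediately in $(aSa]$. Handling the stray $a \leq a^2$ case by iterating the inequality (using order-compatibility of multiplication to get $a \leq a \cdot a \cdot a$) is the small trick that closes the argument; one must be careful that the expansion of the principal ideals is controlled cleanly, which is why the auxiliary facts $(X](Y] \subseteq (XY]$ and $((H]] = (H]$ are worth isolating first.
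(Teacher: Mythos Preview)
Your proof is correct in both directions. The forward direction is the standard argument, and your handling of the converse is careful: the auxiliary facts $(X](Y]\subseteq (XY]$ and $((H]]=(H]$ are exactly what is needed to reduce $(R(a)L(a)]$ to $(a^{2}\cup aSa]$, and the case $a\leq a^{2}$ is correctly dispatched by iterating via compatibility to obtain $a\leq a^{3}\in aSa$.

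As for comparison with the paper: there is nothing to compare. The paper does not prove this theorem at all; it merely quotes it from \cite{Cao02} as a preliminary result. So your argument is not a different route from the paper's proof, it is simply a self-contained proof where the paper relies on an external reference. What your approach buys is independence from that reference, at essentially no cost since the argument is short and elementary.
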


\begin{Theorem}\cite{ke02}\label{.3}
Let $S$ be regular ordered semigroup, and $B$ a bi-ideal of $S$.
Then $B= (BSB]$.
 \end{Theorem}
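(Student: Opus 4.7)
The plan is to prove the set equality $B = (BSB]$ by two inclusions, handling the easier containment first and then invoking regularity for the harder direction.

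For the inclusion $(BSB] \subseteq B$, I would argue directly from the definition of a bi-ideal. Since $B$ is a bi-ideal we have $BSB \subseteq B$, and applying the operator $(\,\cdot\,]$ together with $(B]=B$ gives $(BSB] \subseteq (B] = B$. This step uses nothing beyond the definition quoted in the preliminaries.

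For the reverse inclusion $B \subseteq (BSB]$, I would fix an arbitrary $b \in B$ and exploit the regularity hypothesis on $S$. Since $S$ is regular, the definition from the preliminaries gives $b \in (bSb]$, so there exists some $s \in S$ with $b \leq bsb$. But $b \in B$ forces $bsb \in BSB$, hence $b \leq bsb$ with $bsb \in BSB$ shows $b \in (BSB]$. As $b$ was an arbitrary element of $B$, this yields $B \subseteq (BSB]$.

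Combining the two inclusions will immediately deliver $B = (BSB]$. I do not anticipate any real obstacle here: the argument is a direct unfolding of the definitions of bi-ideal, regularity, and the $(\,\cdot\,]$ operator, with the only subtlety being the use of $b \leq bsb$ (rather than equality) to place $b$ inside $(BSB]$, which is precisely what the down-set closure is designed to accommodate.
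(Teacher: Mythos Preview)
Your argument is correct. Note that the paper does not actually supply a proof of this theorem---it is quoted from \cite{ke02} as a known result---so there is nothing in the paper to compare against; your two-inclusion argument using $BSB\subseteq B$, $(B]=B$, and regularity $b\le bsb$ is precisely the standard proof one would give.
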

 By an ordered idempotent \cite{bh1}  in an ordered semigroup $S$, we shall mean an
element $e\leq S$ such that $e \leq e^2$. The set of all ordered
idempotents in $S$ will denoted by $E_\leq(S)$.

For example consider the ordered semigroup $(\mathbb{R}^+, \cdot,
\leq )$. Then $(\mathbb{R}^+, \cdot)$ is not regular as a semigroup
but it is ordered regular, as for example $2 \leq 2 \cdot 2 \cdot
2$. Again $1$ is the only idempotent in the semigroup
$(\mathbb{R}^+, \cdot)$ where as each natural number $n$ is an
ordered idempotent.

In an ordered semigroup $S$, every left (right) ideal a quasi-ideal
and every quasi-ideal is a bi-ideal. Keeping in mind that every
t-simple ordered semigroup is a t-simple ordered semigroup, we
restate the result of Kehayopulu \cite{ke02}.
\begin{Theorem}\cite{ke02}\label{0}
An ordered semigroup $S$ is t-simple ordered semigroup if and only
if it has no proper bi-ideal.
\end{Theorem}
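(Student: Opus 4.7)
My plan is to prove both implications directly from the definitions and the principal ideal formulas recalled in Section~2.

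For the easier implication ($\Leftarrow$), I would invoke the remark stated just before the theorem, namely that every left ideal and every right ideal of $S$ is, in particular, a bi-ideal. Consequently, if $S$ has no proper bi-ideal, then $S$ has neither a proper left ideal nor a proper right ideal, and so $S$ is t-simple by definition.

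For the main implication ($\Rightarrow$), assume $S$ is t-simple, let $B$ be a bi-ideal of $S$, and fix any $b \in B$. Since $S$ is both left simple and right simple, $L(b) = R(b) = S$. Using the principal ideal formulas $L(b) = (b \cup Sb]$ and $R(b) = (b \cup bS]$, this means $S = (b \cup Sb] = (b \cup bS]$. For an arbitrary $x \in S$, the first equality yields either $x \le b$ or $x \le sb$ for some $s \in S$. If $x \le b$, then $x \in (B] = B$ immediately. If $x \le sb$, I would apply the second equality to $s$ itself: either $s \le b$, in which case order-compatibility of multiplication gives $x \le sb \le b^{2} \in B$ (using that $B$ is a subsemigroup), or $s \le bt$ for some $t \in S$, in which case $x \le sb \le btb \in BSB \subseteq B$. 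In every branch $x$ lies in $(B] = B$, so $B = S$.

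The main obstacle I foresee is keeping the case split orderly and invoking each of the defining properties of a bi-ideal at exactly the right moment: order-compatibility of multiplication to pass from $s \le b$ or $s \le bt$ to $sb \le b^{2}$ or $sb \le btb$, closure of $B$ under the semigroup operation to absorb $b^{2}$, the inclusion $BSB \subseteq B$ to absorb $btb$, and finally $(B] = B$ to convert the inequalities into membership. Once these roles are separated, the argument is essentially a double application of left- and right-simplicity to a single generator $b \in B$.
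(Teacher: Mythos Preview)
Your argument is correct in both directions. Note, however, that the paper does not actually supply its own proof of this statement: Theorem~\ref{0} is quoted from \cite{ke02} as a preliminary result and is restated without proof, so there is no in-paper argument to compare against directly.

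That said, the paper later establishes essentially the same equivalence in Theorem~\ref{bi simple} (the implication $(4)\Rightarrow(1)$ there is exactly ``t-simple $\Rightarrow$ no proper bi-ideal''). The route taken there differs slightly from yours: for $a\in B$ the paper uses $S=L(a^{2})=R(a^{2})$, writes an arbitrary $x$ as $x\le sa^{2}$ with $s\in S^{1}$, and then applies $S=R(a^{2})$ to the element $sa$ to obtain $sa\le a^{2}t$, whence $x\le a(at)a\in BSB\subseteq B$. Your version works with $L(b)$ and $R(b)$ rather than the squares and handles the degenerate branches $x\le b$ and $s\le b$ explicitly via $(B]=B$ and the subsemigroup closure of $B$; the paper's version avoids the explicit case split at the cost of passing through $a^{2}$ and the monoid $S^{1}$. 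Both are elementary and of comparable length.
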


\begin{Theorem}\cite{bh1}\label{2}
An ordered semigroup $S$ is completely regular if and only if $S$ is
union of t-simple ordered semigroups.
\end{Theorem}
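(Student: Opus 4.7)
My plan is to prove the two implications separately, with the reverse implication being direct via Theorem~\ref{0} and the forward implication requiring the bulk of the work.

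For the easier direction $(\Leftarrow)$, suppose $S=\bigcup_{i\in I}T_i$ where each $T_i$ is a t-simple ordered subsemigroup of $S$. Fix $a\in S$ and choose $i$ with $a\in T_i$. I would form the set $B=(a^{2}T_i a^{2}]_{T_i}$ and verify by routine closure checks that $B$ is a bi-ideal of $T_i$: closure under multiplication follows from $a^{2}s_1 a^{2}\cdot a^{2}s_2 a^{2}=a^{2}(s_1 a^{4}s_2)a^{2}$, the sandwich inclusion $B T_i B\subseteq B$ from $a^{2}s_1 a^{2}\cdot t\cdot a^{2}s_2 a^{2}=a^{2}(s_1 a^{2}ta^{2}s_2)a^{2}$, and downward closure is built in. Since $T_i$ is t-simple, Theorem~\ref{0} forces $B=T_i$, so $a\in T_i$ gives $a\leq a^{2}sa^{2}$ for some $s\in T_i\subseteq S$, proving $a\in(a^{2}Sa^{2}]$ and hence $S$ is completely regular.

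For the harder direction $(\Rightarrow)$, suppose $S$ is completely regular. For each $a\in S$ I aim to produce a t-simple ordered subsemigroup containing $a$, so that $S$ is the union of these pieces as $a$ ranges over $S$. The natural candidate is the $\hc$-class $H_a=\{b\in S:L(a)=L(b),\ R(a)=R(b)\}$ equipped with the restricted order. The first subtask is to verify $H_a$ is an ordered subsemigroup. Since $a$ is completely regular, $a\leq a^{2}xa^{2}$ witnesses $a\in(Sa^{2}]\cap(a^{2}S]$, giving $L(a)=L(a^{2})$ and $R(a)=R(a^{2})$, so $a^{2}\in H_a$. Extending this to any pair $b,c\in H_a$ using the equalities $L(b)=L(c)=L(a)$ and $R(b)=R(c)=R(a)$, one checks $L(bc)=L(a)$ and $R(bc)=R(a)$, hence $bc\in H_a$.

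The second and main subtask is to show that $(H_a,\cdot,\leq)$ is t-simple. By Theorem~\ref{0}, it suffices to prove that $H_a$ has no proper bi-ideal. Given a bi-ideal $B$ of $H_a$, an element $b\in B$, and an arbitrary $c\in H_a$, I would use the $\lc$- and $\rc$-relations inside $H_a$ together with the identity $b\leq b^{2}yb^{2}$ supplied by complete regularity to produce an element $s\in H_a$ with $c\leq b s b$, thereby placing $c$ in $B$. Relocating the ``sandwich'' element from $S$ into $H_a$ is the technical heart of the argument and the main obstacle: one must show that the inverse-like elements provided by the identity $a\in(a^{2}Sa^{2}]$ can in fact be chosen within the $\hc$-class, which is the ordered analogue of the classical fact that $\hc$-classes in a completely regular semigroup are groups. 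Once this is achieved, $B=H_a$, and combining with the subsemigroup property shows each $H_a$ is t-simple, so $S=\bigcup_{a\in S}H_a$ is the desired union.
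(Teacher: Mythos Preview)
The paper does not prove Theorem~\ref{2}; it is quoted from \cite{bh1} as a preliminary result, so there is no in-paper argument to compare against. I comment on your proposal directly.

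Your $(\Leftarrow)$ argument is correct and complete: $(a^{2}T_ia^{2}]_{T_i}$ is indeed a nonempty bi-ideal of the t-simple ordered semigroup $T_i$, so Theorem~\ref{0} forces it to be all of $T_i$ and the inequality $a\le a^{2}sa^{2}$ follows.

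Your $(\Rightarrow)$ strategy---taking the $\hc$-classes as the t-simple pieces---is the right one, and the closure of $H_a$ under multiplication can be justified: from complete regularity $R(b)=R(b^{2})$, so $b\le b^{2}u$, while $R(b)=R(c)$ gives $b\le cv$, whence $b\le b(cv)u=bc(vu)\in R(bc)$; the $L$-side is dual. However, you yourself flag the crucial step---producing a sandwich witness \emph{inside} $H_a$---as an unresolved obstacle, and as written this is a genuine gap rather than a routine verification. The naive witness $t$ in $c\le tb$ coming from $L(c)=L(b)$ lies only in $S$, and replacing it by something like $h_cct$ controls $R$ but not $L$. One way to close the gap, using the tools already in the paper, is to observe that the specific element $h=a^{2}ta^{2}ta^{2}ta^{2}$ constructed in Lemma~\ref{cr12} actually lies in $H_a$: from $a\le a^{2}h$ and $a\le ha^{2}$ one gets $a\in L(h)\cap R(h)$, while $h\in a^{2}S\cap Sa^{2}$ gives the reverse inclusions. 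This supplies, for every $b\in H_a$, an ``inverse'' $h_b\in H_a$ with $b\le bh_bb$, $b\le b^{2}h_b$, $b\le h_bb^{2}$, and from there the left- and right-simplicity of $H_a$ can be established. Until that relocation argument is written out, the forward direction remains a plan rather than a proof.
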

\begin{Lemma}\label{cr12}
Let $S$ be a completely regular ordered semigroup. Then following
statements hold in $S$:
\begin{itemize}
 \item[(1)]For every $a \in S$ there is $h \in S$ such that
$a\leq aha, \;a \leq a^2h, \;\textrm{and} \;a \leq ha^2$.
 \item[(2)] For every $a \in S$ there is $h \in S$ such that
$a\hc ah \;\textrm{and} \;a \hc ha$.
 \item[(3)] For every $a \in S$ there is $e, f \in E_\leq(S)$ such that
$e \hc f$.
\end{itemize}
\end{Lemma}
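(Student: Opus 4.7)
The plan is to handle the three assertions in order, driving the argument by the defining inequality $a\leq a^2xa^2$ of complete regularity together with the decomposition of $S$ into $t$-simple ordered subsemigroups supplied by Theorem~\ref{2}.

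For (1), I pick $x\in S$ with $a\leq a^2xa^2$. This single inequality already yields each of the three desired inequalities for an appropriate witness: $h=axa$ gives $a\leq a(axa)a=aha$, $h=xa^2$ gives $a\leq a^2(xa^2)=a^2h$, and $h=a^2x$ gives $a\leq (a^2x)a^2=ha^2$. The sharper content of (1) is that these three can be forced by a common $h$. For this I place $a$ inside a $t$-simple block $T$ of $S$ via Theorem~\ref{2}, observe that $(a^2Ta^2]$ is a bi-ideal of $T$ (the product $a^2Ta^2\cdot T\cdot a^2Ta^2$ sits inside $a^2Ta^2$), and invoke Theorem~\ref{0} to conclude $(a^2Ta^2]=T$, so that $a\leq a^2ha^2$ for some $h\in T$. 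A careful iteration of this inequality inside $T$, in which $a\leq a^2ha^2$ is substituted into itself and the outermost copies of $a$ are absorbed into a single augmented multiplier, is then the route to one $h\in T$ that simultaneously witnesses $a\leq aha$, $a\leq a^2h$ and $a\leq ha^2$.

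For (2), I carry the $h$ from (1) forward and verify $R(a)=R(ah)$ and $L(a)=L(ah)$, and symmetrically for $ha$. Since $a$ is regular, $R(a)=(aS]$ and $L(a)=(Sa]$. The inclusion $ah\in aS\subseteq R(a)$ is immediate, and $a\leq aha=(ah)a\in(ahS]$ gives $a\in R(ah)$; similarly $a\leq a^2h=a(ah)\in(S\cdot ah]$ gives $a\in L(ah)$. The remaining direction $ah\in L(a)=(Sa]$ is the subtle one: here I use that the witness $h$ from (1) may be chosen inside the $t$-simple block $T$ containing $a$, so Theorem~\ref{0} forces $h\in T=(Ta]$ and hence $h\leq ta$ for some $t\in T$, whence $ah\leq ata\in Sa$. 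A mirror argument using $a\leq ha^2$ and $h\in(aT]$ delivers $a\,\hc\,ha$.

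For (3), set $e:=ah$ and $f:=ha$ with $h$ as in (1). Multiplying $a\leq aha$ on the right by $h$ yields $ah\leq ahah$, i.e.\ $e\leq e^2$, so $e\in E_{\leq}(S)$; multiplying on the left by $h$ yields $ha\leq haha$, so $f\leq f^2$ and $f\in E_{\leq}(S)$. By (2), $e=ah\,\hc\,a\,\hc\,ha=f$, and transitivity of $\hc$ then gives $e\,\hc\,f$. The main obstacle in the whole argument is the sharp single-witness form of (1): each inequality is easy in isolation, but producing one $h$ that serves all three forces me to work inside a $t$-simple block and iterate $a\leq a^2xa^2$ carefully; once that is in hand, and provided $h$ lies in the $t$-simple block containing $a$ (which is what makes the $ah\leq ata$ step in (2) go through), parts (2) and (3) reduce to the routine symbolic manipulations above.
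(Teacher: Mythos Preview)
Your argument is correct, but it takes a structurally heavier route than the paper's.

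The paper never invokes Theorem~\ref{2} or Theorem~\ref{0}. For (1) it works purely by iteration of the single inequality $a\leq a^2ta^2$: substituting $a\leq a^2ta^2$ into itself twice yields $a\leq a^3ta^2ta^2ta^3=a\,h\,a$ with the explicit witness $h=a^2ta^2ta^2ta^2$, and then $a\leq a^2h$ and $a\leq ha^2$ are read off from the same chain. For (2) the paper exploits this explicit form directly: since $h$ begins and ends with~$a^2$, one has $ah=(a^3ta^2ta^2ta)\,a=ua\in Sa$ outright, which is exactly the ``subtle'' inclusion $ah\in L(a)$ that you secure by passing to a $t$-simple block and using $h\in(Ta]$. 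Part (3) is identical in both treatments.

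What each approach buys: the paper's computation is entirely self-contained and makes no appeal to the decomposition of $S$ into $t$-simple pieces, so Lemma~\ref{cr12} stands independently of Theorem~\ref{2}. Your approach is conceptually transparent---it explains \emph{why} a common $h$ exists and why $ah\in Sa$ without juggling long words---but it imports a nontrivial structure theorem to prove what is really an elementary inequality. Your iteration step in (1) is also left as a sketch; the paper's explicit $h=a^2ta^2ta^2ta^2$ shows concretely how to discharge it without detouring through~$T$.
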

\begin{proof}
$(1)$: Let $a \in S$.  Then there is $t \in S$ such that $a \leq
a^2ta^2$. Now $a  \leq a^3ta^2ta^2 \leq  a^3ta^2ta^2ta^3 \leq aha,
\;\textrm{where} \;h= a^2 ta^2 ta^2 ta^2$. Also $a \leq ha^2$ and $a
\leq a^2h$ are obvious.

$(2)$: Let $a \in S$. Then $a \leq aha$, where $h= a^2 ta^2 ta^2
ta^2$, follows from the proof of $(1)$. Now $ah= a( a^2 ta^2 ta^2
ta^2)=ua$ for some $u= a( a^2 ta^2 ta^2 ta) \in S$. Then from $(1)$
it follows that $a\hc ah $. Similarly $a\hc ha$.

$(3)$: This statement is  fairly straightforward.  $e=ah, \;f=ha \in
E_\leq(S)$ as in $(1)$ serves our purpose.
\end{proof}

For a semigroup   $S$  (without order),  the set $P(S)$ of all
finite subsets of $S$ is a semilattice ordered semigroup with the
operation $\cdot \;\textrm{and} \; \leq$ defined as follows:
$$\textrm{For} \;A, B \in P(F), \;A \cdot B = \{ab \mid a \in A, b
\in B\} \;\textrm{and} \;A \leq B \;\textrm{if and only if} \; A
\subseteq B \cite{bh1}.$$
\begin{Lemma}\label{0.1}
Let $S$ be a  regular semigroup. Then $P(S)$ is  regular.
\end{Lemma}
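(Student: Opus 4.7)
The plan is to unravel what regularity of $P(S)$ actually demands and then exhibit the needed element directly from the semigroup-theoretic regularity of $S$. Since the order on $P(S)$ is set inclusion, the down-set operator satisfies $(\mathcal{H}] = \{T \in P(S) : T \subseteq H \text{ for some } H \in \mathcal{H}\}$. Hence regularity of the ordered semigroup $P(S)$ is the statement that for every finite $A \subseteq S$ there exists a finite $X \subseteq S$ with $A \subseteq AXA$.

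First I would fix an arbitrary finite subset $A = \{a_1, \ldots, a_n\} \in P(S)$. Because $S$ is regular as a semigroup, for each index $i$ there is some $x_i \in S$ with $a_i = a_i x_i a_i$. I would then set $X = \{x_1, \ldots, x_n\}$, which is a finite subset of $S$ and so belongs to $P(S)$.

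Next I would verify the required inclusion. By the definition of the product in $P(S)$, the set $AXA$ contains every element of the form $a_j x_k a_\ell$ with $1 \leq j, k, \ell \leq n$; in particular it contains each $a_i x_i a_i = a_i$. Therefore $A \subseteq AXA$, which in the order of $P(S)$ reads $A \leq AXA$, so $A \in (AXA]$. Since $A$ was arbitrary, $P(S)$ is a regular ordered semigroup.

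There is no genuine obstacle in this argument; the only place one could slip is in translating the abstract definition of regularity in an ordered semigroup to the concrete order on $P(S)$ and in making sure the chosen witness $X$ is finite, which it is by construction. The key observation is simply that the standard pointwise regularity witnesses for the finitely many elements of $A$ can be collected into a single finite set $X$ that serves as a regularity witness for $A$ in $P(S)$.
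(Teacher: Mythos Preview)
Your argument is correct. The paper actually states this lemma without proof, so there is nothing to compare against; what you have written is exactly the natural verification. The translation of ordered regularity in $P(S)$ into the inclusion $A \subseteq AXA$ is right, and collecting one regularity witness $x_i$ for each $a_i \in A$ into the finite set $X$ gives the required element of $P(S)$.
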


\begin{Lemma}\label{.1}
Let $S$ be a regular ordered semigroup. Then the following
statements  hold in $S$:
\begin{itemize}
 \item[(1)]
 For every  $a \in S, \; B(a )= (R(a) L(a)]$.
\item[(2)]
$(SA]\cap (AS] = (SA\cap AS]$ for any non empty subset $A$ of $S$.
\end{itemize}
\end{Lemma}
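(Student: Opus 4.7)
The plan is to handle the two parts separately, leveraging regularity in two ways: through the formulas $R(a)=(aS]$, $L(a)=(Sa]$, $B(a)=(aSa]$ available when $a$ itself is regular, and through the local factorization $x\leq xsx$ available for any element~$x$.

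For (1), I would first note that by Theorem~\ref{1} we have $(R(a)L(a)]=R(a)\cap L(a)$, so it suffices to prove $B(a)=R(a)\cap L(a)$. The inclusion $B(a)\subseteq R(a)\cap L(a)$ is immediate: since $S$ is regular, $B(a)=(aSa]$, and clearly $(aSa]\subseteq(aS]=R(a)$ and $(aSa]\subseteq(Sa]=L(a)$. For the reverse inclusion, I would take $x\in R(a)\cap L(a)$, write $x\leq au$ and $x\leq va$ for some $u,v\in S$, and use regularity of $x$ to get $s\in S$ with $x\leq xsx$. Substituting the two bounds gives $x\leq(au)s(va)=a(usv)a\in aSa$, so $x\in(aSa]=B(a)$.

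For (2), the inclusion $(SA\cap AS]\subseteq(SA]\cap(AS]$ is trivial from $SA\cap AS\subseteq SA$ and $SA\cap AS\subseteq AS$. For the nontrivial direction, I would take $x\in(SA]\cap(AS]$ so that $x\leq ua$ and $x\leq bv$ for some $a,b\in A$ and $u,v\in S$, and again invoke regularity to pick $s\in S$ with $x\leq xsx$. The key observation is that we should substitute $x\leq bv$ into the \emph{left} $x$ and $x\leq ua$ into the \emph{right} $x$; this yields
\[
x\;\leq\;xsx\;\leq\;(bv)s(ua)\;=\;b(vsu)a,
\]
and the element $b(vsu)a$ lies in both $AS$ (as $b\cdot(vsu\,a)$) and $SA$ (as $(b\,vsu)\cdot a$). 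Hence $x\in(SA\cap AS]$.

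The only point that requires care is choosing the right orientation of the substitution in (2): the ``obvious'' ordering $(ua)s(bv)$ puts the witness in $SAS$, outside $SA\cap AS$, whereas the reversed ordering $(bv)s(ua)$ collapses the outer factors precisely into an element of $A$ on each end, so that the product sits simultaneously in $SA$ and $AS$. Once this trick is spotted, both statements are one-line consequences of regularity.
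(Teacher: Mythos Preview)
Your proof is correct and, for part~(2), essentially identical to the paper's argument (same substitution $x\leq xzx$ with the same orientation trick). For part~(1) you take a slight detour: you invoke Theorem~\ref{1} to rewrite $(R(a)L(a)]$ as $R(a)\cap L(a)$ and then prove $B(a)=R(a)\cap L(a)$, whereas the paper shows $B(a)=(R(a)L(a)]$ directly from the identities $B(a)=(aSa]$, $R(a)=(aS]$, $L(a)=(Sa]$; both routes use the same ingredients and are equally short.
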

\begin{proof}
$(1)$ Let $x \in B(a)$. Since $S$ is ordered regular there is $s\in
S$ such that $x \leq asa$. Note that $a \in R(a)$ and this  yields
that $as \in R(a)$. Also $a \in L(a)$. Thus $asa \in R(a)L(a)$, so
$x \in R(a) L(a)$. Therefore $B(a )\subseteq (R(a) L(a)]$.

Again for some  $y \in (R(a) L(a)]$ there  is $s, t \in S$ such that
$y \leq asta$. Then $y \in (aSa]= B(a)$. Hence $B(a )= (R(a) L(a)]$.

$(2)$ First consider a nonempty subset $A$ of $S$.  Let $x \in
(SA]\cap (AS]$. Then there are $s,t \in S$ and $a,b \in A$ such that
$x \leq sa, \;x \leq bt$. Since $S$ is ordered regular $x\leq xzx$
for some $z \in S$ so that $x \leq bt z sa$. Now $b(tzsa)\in AS,
\;(btzs)a \in SA$. This yields that $btzsa \in SA\cap AS$. Therefore
$x \in (SA \cap AS]$ and hence $(SA]\cap (AS]\subseteq (SA\cap AS]$.
Also it is obvious that $(SA\cap AS] \subseteq (SA]\cap (AS]$. So
finally $(SA]\cap (AS] = (SA\cap AS]$.
\end{proof}

\begin{Theorem}\label{.2}
In a  regular ordered semigroup a nonempty subset $A$ of $S$ is a
bi-ideal of $S$ if and only if $A= (RL] $ for a right ideal $R$ and
a left ideal $L$ of $S$.
\end{Theorem}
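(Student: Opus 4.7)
The plan is to prove the two directions separately, choosing in the forward direction the candidate ideals $R=(AS]$ and $L=(SA]$ and in the reverse direction verifying the three defining properties of a bi-ideal using only that $R$ is a right ideal, $L$ is a left ideal, and that $(H]]=(H]$.

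For the ``only if'' direction, I would first observe that, since $S$ is regular, every $a\in A$ satisfies $a\le asa=a(sa)=(as)a$ for some $s\in S$, so $a\in(aS]\subseteq(AS]$ and $a\in(Sa]\subseteq(SA]$; in particular $A\subseteq R\cap L$ for the choices $R:=(AS]$ and $L:=(SA]$. Routine checks show $R$ is a right ideal and $L$ is a left ideal. To prove $A=(RL]$, I would handle the inclusion $(RL]\subseteq A$ by the chain $(RL]=((AS](SA]]\subseteq(ASSA]\subseteq(ASA]\subseteq(A]=A$, using only that $A$ is a bi-ideal; and for $A\subseteq(RL]$, given $a\in A$ and $s$ with $a\le asa$, note that $as\in AS\subseteq R$ and $a\in A\subseteq L$, so $asa=(as)\cdot a\in RL$ and hence $a\in(RL]$.

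For the ``if'' direction, assume $A=(RL]$ for some right ideal $R$ and left ideal $L$. I would verify the three bi-ideal conditions. Downward closure is automatic since $(RL]]=(RL]$. For the subsemigroup property, compute $A\cdot A=(RL](RL]\subseteq(RLRL]$, and reduce the middle product using $LR\subseteq SR\subseteq R$ and $RR\subseteq RS\subseteq R$ to get $RLRL\subseteq R(LR)L\subseteq RRL\subseteq RL$, giving $A\cdot A\subseteq(RL]=A$. For $ASA\subseteq A$, similarly compute $(RL]S(RL]\subseteq(RLSRL]$ and observe $LSR\subseteq SSR\subseteq SR\subseteq R$, so $RLSRL\subseteq RRL\subseteq RL$, which gives $ASA\subseteq(RL]=A$.

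No genuine obstacle should arise; the only subtle point is recognising that both halves hinge on the same ``absorption'' identities $RS\subseteq R$, $SL\subseteq L$, $LR\subseteq R$ (or symmetrically $LR\subseteq L$), which let every word of the form $R(\cdots)L$ in $S$ collapse to $RL$. The regularity hypothesis is used only in the forward direction, through the inequality $a\le asa$ that puts $a$ below a product in $RL$; Theorem~\ref{.3} is not needed, since the inclusion $(ASA]\subseteq A$ follows directly from $A$ being a bi-ideal.
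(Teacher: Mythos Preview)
The paper states this theorem without proof, so there is no argument in the paper to compare yours against. Your overall strategy---taking $R=(AS]$, $L=(SA]$ in the forward direction and checking the bi-ideal axioms for $(RL]$ in the reverse---is the standard one, and your forward direction is correct as written.

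There is, however, a slip in the reverse direction. You justify the collapses by writing $LR\subseteq SR\subseteq R$ and $LSR\subseteq SSR\subseteq SR\subseteq R$. But $SR\subseteq R$ is the defining property of a \emph{left} ideal; for a right ideal one has only $RS\subseteq R$. So these particular chains are not valid. The conclusions you want are still true, via the correct absorptions $RS\subseteq R$ and $SL\subseteq L$: since $RL\subseteq RS\subseteq R$ and $RL\subseteq SL\subseteq L$, one gets
\[
(RL)(RL)\subseteq R\cdot L=RL,\qquad (RL)\,S\,(RL)\subseteq R\,S\,L\subseteq (RS)L\subseteq RL,
\]
which immediately give $AA\subseteq A$ and $ASA\subseteq A$. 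With this repair your proof is complete, and your final observation that regularity is needed only in the forward direction (through $a\le asa$) is correct.
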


Following  Xu and Ma \cite{XM} we define minimality of bi-ideals in
an ordered semigroup   as follows.
\begin{Definition}\cite{XM}
A bi-ideal $M$ of an ordered semigroup $S$ is called a minimal
bi-ideal if there is no non trivial bi-ideal $B$ such that $M
\subset  B$.
\end{Definition}
\begin{Theorem}\cite{XM}\label{t simple}
A bi-ideal $B$ of an ordered semigroup  $S$ is  minimal if and only
if $B$ is t-simple.
\end{Theorem}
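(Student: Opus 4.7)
The plan is to reduce both directions to Theorem~\ref{0}, which says an ordered semigroup is t-simple iff it has no proper bi-ideal. The task is then to shuttle bi-ideals back and forth between $S$ and $B$, the latter viewed as an ordered semigroup in its own right under the inherited operation and order.

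For \emph{t-simple implies minimal}, I would take a bi-ideal $B'$ of $S$ with $\emptyset \neq B' \subseteq B$ and verify that $B'$ is a bi-ideal of $B$: the subsemigroup axiom is inherited, the down-closure satisfies $(B']_B = (B']_S \cap B = B' \cap B = B'$, and $B' B B' \subseteq B' S B' \subseteq B'$ since $B \subseteq S$. Theorem~\ref{0} applied to the t-simple semigroup $B$ then forces $B' = B$, so $B$ is minimal.

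For \emph{minimal implies t-simple}, the device is to assign to each $a \in B$ the set $B_a := (aBa]$. Direct computations, using respectively that $B$ is a subsemigroup and that $BSB \subseteq B$, yield $B_a \cdot B_a \subseteq (aBa] = B_a$ and $B_a \cdot S \cdot B_a \subseteq (aBSBa] \subseteq B_a$, so $B_a$ is a bi-ideal of $S$, nonempty (for any $b \in B$, $aba \in B_a$) and contained in $B$; minimality therefore forces $B_a = B$. Now let $B''$ be any bi-ideal of the ordered semigroup $B$; I pick $a \in B''$ and an arbitrary $c \in B$, use $c \in B = B_a$ to get $c \leq axa$ with $x \in B$, observe that $axa \in B'' B B'' \subseteq B''$, and conclude $c \in (B'']_B = B''$. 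Hence $B'' = B$ and Theorem~\ref{0} delivers t-simplicity.

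The main obstacle is choosing the right auxiliary bi-ideal of $S$ inside $B$ to exploit minimality. The natural candidate, the principal bi-ideal $B(a) = (a \cup a^2 \cup aSa]$ of $S$, leaves an uncontrolled factor $s \in S$ in the term $asa$, and a bi-ideal $B''$ of $B$ cannot in general absorb such a factor. The refinement to $(aBa]$ traps the middle factor inside $B$, so that the factorisation $axa = a \cdot x \cdot a \in B'' B B''$ brings the bi-ideal axiom of $B''$ directly into play and closes the argument.
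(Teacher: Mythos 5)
Your proof is correct. The paper quotes this theorem from Xu--Ma without giving a proof, but it effectively reproves it later (Theorems \ref{bi simple} and \ref{mini bi}), and your argument is essentially the same route: the auxiliary bi-ideal $(aBa]$ you use to exploit minimality is exactly the paper's $(xBx]$ device, and both arguments ultimately reduce to Theorem \ref{0}.
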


\section{Characterizations of regular ordered semigroups  by their minimal bi-ideals}
In this section we characterize regular ordered by their minimal
bi-ideals.  We prove that a bi-ideal in a regular  ordered semigroup
is minimal if and only if it is a $\hc$-class. Also a regular
ordered semigroup is completely regular if and only if it is union
of minimal bi-ideals.

The following result makes  a natural analogy between  a bi-ideal in
a semigroup and a bi-ideal in an ordered semigroup.

\begin{Theorem}\label{lemma2}
Let $S$ be  an ordered semigroup $S$. Then for any bi-ideal $A$ of
$S$,  $P(A)$ is a bi-ideal  of $P(S)$.
\end{Theorem}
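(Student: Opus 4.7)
The plan is to verify the three defining properties of a bi-ideal for $P(A)$ as a subset of $P(S)$: that $P(A)$ is a subsemigroup, that $P(A)\cdot P(S)\cdot P(A)\subseteq P(A)$, and that $(P(A)]=P(A)$ in the ordered semigroup $P(S)$. The underlying idea is that each property for $P(A)$ in $P(S)$ should reduce, elementwise, to the corresponding property for $A$ in $S$, because multiplication in $P(S)$ is pointwise on representatives and the order on $P(S)$ is just set inclusion.

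First I would check the subsemigroup condition. Given $X,Y\in P(A)$, the product $X\cdot Y=\{xy:x\in X,\,y\in Y\}$ is a finite set whose elements all lie in $A$, since $A$ is a subsemigroup of $S$; hence $X\cdot Y\in P(A)$. Next, for the bi-ideal inclusion, take any $X,Z\in P(A)$ and any $Y\in P(S)$. A typical element of $X\cdot Y\cdot Z$ has the form $xyz$ with $x\in A$, $y\in S$, $z\in A$, and therefore lies in $ASA\subseteq A$ by the bi-ideal property of $A$; as $X\cdot Y\cdot Z$ is again finite, we conclude $X\cdot Y\cdot Z\in P(A)$, establishing $P(A)\cdot P(S)\cdot P(A)\subseteq P(A)$.

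The remaining task is to show $(P(A)]=P(A)$ in $P(S)$. By definition of the order on $P(S)$, for $T\in P(S)$ we have $T\in (P(A)]$ iff there exists $X\in P(A)$ with $T\subseteq X$. Any such $T$ is then a finite subset of $A$, so $T\in P(A)$; the reverse inclusion is automatic. Here the one point worth emphasising is that we are \emph{not} invoking $(A]=A$ in $S$ at all; in $P(S)$ the closure under the downset operation is forced by the definition of the order, so this step is essentially automatic.

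I do not expect a real obstacle: the proof is a straightforward unwinding of definitions. If anything, the only subtlety to watch is being careful to distinguish the order on $P(S)$ (set inclusion) from the order on $S$, so that $(P(A)]$ is correctly interpreted, and to note that the finiteness of $X\cdot Y$ and $X\cdot Y\cdot Z$ is preserved so that these really are elements of $P(S)$ and $P(A)$ and not merely subsets.
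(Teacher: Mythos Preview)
Your proposal is correct and follows essentially the same approach as the paper: both verify $P(A)\,P(S)\,P(A)\subseteq P(A)$ by reducing to $ASA\subseteq A$, and both verify $(P(A)]=P(A)$ by observing that the order on $P(S)$ is set inclusion, so a subset of some $X\in P(A)$ is again a finite subset of $A$. If anything, your write-up is slightly more complete, since you explicitly check that $P(A)$ is a subsemigroup of $P(S)$ (a step the paper's proof leaves implicit) and you correctly flag that the downward-closure step uses only the inclusion order on $P(S)$ and not the condition $(A]=A$ in $S$.
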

\begin{proof}
$(1):$ Let  $X \in P(A) P(S) P(A)$. Then there are $X_1, X_2 \in
P(A)$ and $Y \in P(S)$ such that $X=X_1 Y X_2$. Since $X_1, X_2 \in
P(A)$ we have that $X_1, X_2 \subseteq  A$ and so $X \subseteq ASA$.
Since $A$ is a bi-ideal of $S$ we have $X \in A$. Therefore $X\in
P(A)$. Hence $P(A)P(S)P(A) \subseteq P(A)$.

Next let $Y \in (P(A)]$. Then there is $Z \in P(A)$ such that $Y
\subseteq Z$. Then $Y \subseteq Z \subseteq A$, since $Z \in P(A)$.
Therefore $Y \in P(A)$ and so $(P(A)]= P(A)$. Hence $P(A)$ is a
bi-ideal of $P(S)$.
\end{proof}

\begin{Theorem}\label{th4}
Let $S$ be a regular ordered  semigroup. Then a non empty subset $B$
of $S$ is a minimal bi-ideal of $S$ if and only if $B=(RL]$ for some
minimal right-ideal $R$ and minimal left ideal $L$ of $S$.
\end{Theorem}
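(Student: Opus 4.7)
My plan is to use Theorem~\ref{.2} and Lemma~\ref{.1}(1), which together give me -- for a regular ordered semigroup -- the explicit shape $B=(RL]$ for any bi-ideal and the factorisation $B(a)=(R(a)L(a)]$ for the principal bi-ideal of any $a$. Both implications should then reduce to direct manipulations of principal left and right ideals, together with the defining property of minimality.

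For the forward direction, suppose $B$ is a minimal bi-ideal and fix any $a\in B$. Since $B(a)$ is a bi-ideal of $S$ contained in $B$, minimality gives $B(a)=B$, and Lemma~\ref{.1}(1) rewrites this as $B=(R(a)L(a)]$. The main task is to show that $R(a)$ is itself minimal among right ideals of $S$ (the argument for $L(a)$ being symmetric). To this end I would take any right ideal $R'\subseteq R(a)=(aS]$ and introduce the auxiliary set $(R'a]$. Using that $R'$ absorbs right multiplication by $S$, the inclusions $R'aR'a\subseteq R'a$ and $R'aSR'a\subseteq R'a$ follow directly, so $(R'a]$ is a bi-ideal of $S$; moreover $r'\leq as$ for $r'\in R'$ gives $r'a\leq asa$, so $(R'a]\subseteq(aSa]=B$. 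Minimality now forces $(R'a]=B$. Since $a\in B=(R'a]$, I can pick $r'\in R'$ with $a\leq r'a$; then for every $x\in S$, $ax\leq r'ax\in R'S\subseteq R'$, and down-closedness of $R'$ gives $aS\subseteq R'$. Combined with $a\leq asa\in aS\subseteq R'$, this yields $R(a)=(a\cup aS]\subseteq R'$, hence $R'=R(a)$.

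For the converse I start with $B=(RL]$ for a minimal right ideal $R$ and a minimal left ideal $L$; Theorem~\ref{.2} already says $B$ is a bi-ideal. Let $B'\subseteq B$ be any bi-ideal and pick $b\in B'$; then $B(b)\subseteq B'$. Writing $b\leq rl$ with $r\in R$, $l\in L$, one has $rl\in RS\subseteq R$ and $bs\leq rls\in R$ for every $s\in S$, so $R(b)=(b\cup bS]\subseteq R$. Minimality of $R$ gives $R(b)=R$, and symmetrically $L(b)=L$; hence $B(b)=(R(b)L(b)]=(RL]=B$, which forces $B\subseteq B'$ and therefore $B=B'$. The one delicate step is the forward-direction trick of converting a right subideal $R'\subseteq R(a)$ into the bi-ideal $(R'a]$ inside $B$ -- once that auxiliary object is in place, the regularity of $a$ sitting in the minimal $B$ lets me push $R'$ back up to all of $R(a)$; the converse, by contrast, is essentially bookkeeping with the formulas for $R(b)$ and $L(b)$.
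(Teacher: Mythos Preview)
Your argument is correct in both directions, but the auxiliary objects you build differ from those in the paper. In the forward direction the paper, given a right ideal $R'\subseteq R(a)$, forms the bi-ideal $(R'L(a)]$ and invokes Theorem~\ref{1} to rewrite it as $R'\cap L(a)$; minimality of $B$ then gives $R'\cap L(a)=R(a)\cap L(a)$, whence $a\in R'$ and $R'=R(a)$. Your trick of using $(R'a]$ instead is more bare-handed: you verify the bi-ideal axioms directly and avoid Theorem~\ref{1} entirely, at the cost of a short extra computation. In the converse the paper works globally, observing that $(B'S]\subseteq R$ and $(SB']\subseteq L$ are a right and a left ideal, applying minimality of $R$ and $L$, and then using Theorem~\ref{.3} to get $B=((B'S](SB']]\subseteq(B'SB']=B'$. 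You instead localise to a single $b\in B'$, show $R(b)=R$ and $L(b)=L$ by minimality, and conclude via Lemma~\ref{.1}(1) that $B(b)=(RL]=B$. Your route sidesteps Theorem~\ref{.3}; the paper's route avoids having to track an individual element. Both are clean, and neither is materially shorter than the other.
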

\begin{proof}
First suppose that $B$ is a minimal bi-ideal of $S$. Then  for every
$a \in B$, $B(a)= B$  and hence $B= (R(a)L(a)]$, by Lemma
\ref{.1}(1). Let $R$ be a right ideal of $S$ such that $R \subseteq
R(a)$. Since $S$ is regular, $(R(a) L(a)]= R(a) \cap L(a)$, by
Theorem \ref{1}. Now $(R L(a)]= R \cap L(a) \subseteq R(a) \cap
L(a)= B$. Also by Theorem \ref{.2},  $(R L(a)]$ itself a bi-ideal of
$S$ contained in $B$. By the minimality of $B$ it follows that $B=
(R L(a)]$. That is $R \cap L(a)= R(a) \cap L(a)$. Note that $a \in
R(a) \cap L(a)= R \cap L(a)$. Thus $a \in R$. Then for every $x \in
R(a), \;x \in R$. This implies that $R= R(a)$. Therefore $R(a)$ is a
minimal right ideal of $S$. Similarly $ L(a)$ is  a minimal left
ideal of $S$. Thus the condition is necessary.

Conversely, let $B$ be a nonempty  subset of $S$  such that $B=(RL]$
for a minimal left ideal $L$ and a minimal right ideal $R$ of $S$.
Then $B$ is a bi-ideal of $S$, by Theorem \ref{.2}. To prove the
minimality of $B$ let us choose a bi-ideal $B'$ of $S$ such that $B'
\subseteq B$. Then $(SB' ] \subseteq (SB]  \subseteq (S(RL]]
\subseteq (SL] \subseteq L$, since $L$ is a left ideal of $S$.

Likewise $ (B'S]\subseteq R$. Also  $(SB']$ and $(B'S]$ are left and
right ideals of $S$ respectively. Then the minimality of $L$ and $R$
yields that $(SB']= L$ and $(B'S]= R$. Therefore $B= (RL]=
((B'S](SB']] \subseteq (B'SB']= B'$,  by Theorem \ref{.3}. Thus
$B'=B$ and hence $B$ is a minimal bi-ideal of $S$.
\end{proof}
Then the following corollary follows from Theorem \ref{t simple} and
Theorem \ref{1}.

\begin{Corollary}\label{MB7}
Let $S$ be an ordered semigroup. If  $R$ is a minimal right ideal
and $L$ is a minimal left ideal of $S$ then $(RL]$ is a t-simple
ordered subsemigroup of $S$.
\end{Corollary}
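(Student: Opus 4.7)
The plan is to read this as an immediate consequence of the converse half of Theorem~\ref{th4} combined with Theorem~\ref{t simple}. Although the corollary as stated does not explicitly assume regularity, the two cited results (Theorem~\ref{1} and Theorem~\ref{t simple}) force $S$ to be regular for the argument to make sense, and I would state the hypothesis that way.

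First, I would invoke Theorem~\ref{.2} (which itself rests on Theorem~\ref{1}) to conclude that $(RL]$ is a bi-ideal of $S$, since $R$ is a right ideal and $L$ is a left ideal. In particular $(RL]$ is already a subsemigroup of $S$, so what remains is to identify it as t-simple.

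Next, I would show that this bi-ideal $(RL]$ is minimal, which is exactly the content of the converse direction of Theorem~\ref{th4}. The argument there runs as follows, and I would just quote it: given a bi-ideal $B' \subseteq (RL]$, the sets $(SB']$ and $(B'S]$ are a left ideal and a right ideal of $S$ contained in $L$ and $R$ respectively; minimality of $L$ and $R$ then forces $(SB']=L$ and $(B'S]=R$, after which Theorem~\ref{.3} gives $(RL] = ((B'S](SB']] \subseteq (B'SB'] = B'$, so $B'=(RL]$.

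Finally, I would apply Theorem~\ref{t simple} to conclude that the minimal bi-ideal $(RL]$ is t-simple as an ordered subsemigroup of $S$. There is no real obstacle here; the only subtle point is noticing that the ``hard'' work was already carried out in the proof of Theorem~\ref{th4}, so the corollary is essentially a bookkeeping statement combining Theorem~\ref{th4} with the minimal/t-simple equivalence of Theorem~\ref{t simple}.
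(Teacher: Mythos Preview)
Your proposal is correct and matches the paper's own treatment: the corollary is placed immediately after Theorem~\ref{th4} and the paper simply says it ``follows from Theorem~\ref{t simple} and Theorem~\ref{1}'', which is precisely your combination of the converse half of Theorem~\ref{th4} (giving that $(RL]$ is a minimal bi-ideal) with the minimal/t-simple equivalence of Theorem~\ref{t simple}. Your remark that regularity is implicitly being used, even though it is absent from the statement of the corollary, is on point; one small sharpening is that in the final step you only need the inclusion $(B'SB']\subseteq B'$ (the bi-ideal property) rather than the equality from Theorem~\ref{.3}, so in fact the argument goes through without regularity.
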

By the Theorem \ref{1}, Theorem \ref{.2} and \ref{1}  we immediately
have the following corollary.
\begin{Corollary}\label{MB13}
Let $B$ be a bi-ideal of a regular ordered  semigroup $S$. Then $B$
is a minimal bi-ideal of $S$ if and only if $B$ is an intersection
of a minimal left ideal and a minimal right ideal.
\end{Corollary}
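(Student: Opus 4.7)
The plan is to read this as an essentially immediate corollary of Theorem \ref{th4} combined with the regularity characterization in Theorem \ref{1}, since both invoked results trade freely between the product $(RL]$ and the intersection $R\cap L$ whenever $S$ is regular. So I would frame the proof as a short two-way chase, with regularity as the bridge.

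For the forward direction, I would start by assuming $B$ is a minimal bi-ideal of the regular ordered semigroup $S$. Then Theorem \ref{th4} produces a minimal right ideal $R$ and a minimal left ideal $L$ with $B=(RL]$. Since $S$ is regular, Theorem \ref{1} applies to these $R$ and $L$ and gives $(RL]=R\cap L$, so $B=R\cap L$, exhibiting $B$ as the intersection of a minimal right ideal and a minimal left ideal as required.

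For the converse, I would begin by assuming $B=R\cap L$ for some minimal right ideal $R$ and minimal left ideal $L$. Regularity again lets me apply Theorem \ref{1} in the reverse direction to rewrite $B=R\cap L=(RL]$. Then Theorem \ref{th4}, applied to this very $R$ and $L$, returns that $B$ is a minimal bi-ideal of $S$.

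I do not expect any genuine obstacle here: the content has already been done in Theorems \ref{1} and \ref{th4}, and the corollary is just the translation between $(RL]$ and $R\cap L$ made available by regularity. The only small point to be careful about is that both directions must invoke Theorem \ref{th4} with the \emph{same} pair $(R,L)$ that appears (or is produced) in the hypothesis, so that minimality of $R$ and $L$ passes through cleanly; this is automatic from the way Theorems \ref{1} and \ref{th4} are stated, so no further argument is needed.
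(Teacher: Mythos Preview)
Your proof is correct and is exactly the approach the paper has in mind: the corollary is stated as immediate from Theorem \ref{1} (which turns $(RL]$ into $R\cap L$ under regularity) together with Theorem \ref{th4} (and Theorem \ref{.2}), and you have simply written out that two-line translation in both directions. There is nothing to add.
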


We have the following lemma that characterizes the minimality of
bi-ideal in respect of producing same principal bi-ideals.
\begin{Lemma}\label{MB3}
Let $S$ be an ordered semigroup. A bi-ideal $B$ of $S$ is minimal if
and only if $B(a)= B(b)$ for all $a, b \in B$.
\end{Lemma}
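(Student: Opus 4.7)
The plan is to prove both directions directly using the fact that $B(a)$ is the smallest bi-ideal of $S$ containing $a$, so whenever $a$ lies in a bi-ideal $B$ we automatically have $B(a) \subseteq B$.

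For the forward direction, assume $B$ is minimal and pick any $a, b \in B$. Since $a \in B$ and $B$ is a bi-ideal, $B(a) \subseteq B$; but $B(a)$ is itself a bi-ideal of $S$, so minimality forces $B(a) = B$. The same argument gives $B(b) = B$, hence $B(a) = B(b)$. This direction is essentially just unfolding the definition of ``principal bi-ideal'' together with the definition of minimality.

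For the converse, suppose $B(a) = B(b)$ for all $a, b \in B$, and let $B'$ be any bi-ideal of $S$ contained in $B$. Choose a fixed element $b' \in B'$ (so $b' \in B$ as well). Since $B'$ is a bi-ideal containing $b'$, we have $B(b') \subseteq B'$. Now for an arbitrary $a \in B$, the hypothesis yields $B(a) = B(b')$, and because $a \in B(a)$ it follows that $a \in B(b') \subseteq B'$. Hence $B \subseteq B'$, so $B = B'$, and $B$ is minimal.

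The argument involves no real obstacle; the only point requiring care is the standing fact that for $a$ in a bi-ideal $B$ of $S$, the principal bi-ideal $B(a) = (a \cup a^2 \cup aSa]$ is contained in $B$. This is immediate from the explicit description of $B(a)$ since $B$ is closed under the ordered-semigroup operations defining a bi-ideal and satisfies $(B] = B$.
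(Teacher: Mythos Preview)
Your proof is correct and follows essentially the same approach as the paper: for the forward direction you use that $B(a)\subseteq B$ together with minimality to get $B(a)=B=B(b)$, and for the converse you pick an element of a smaller bi-ideal and use the hypothesis to show every element of $B$ already lies there. The only difference is that you spell out explicitly why $B(a)\subseteq B$, which the paper takes for granted.
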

\begin{proof}
First assume that $B$ is a minimal bi-ideal of $S$. Let $a, b \in
B$. Then $B(a)= B= B(b)$.

Conversely, suppose that the given condition holds in $S$. Let $K$
be a bi-ideal of $S$ such that $K\subseteq B$. Let $z \in K$. Then
for every $x\in B,  \;B(x)=B(z)$ implies $x \in B(x)= B(z) \subseteq
K$ and so $K=B$. Hence $B$ is a minimal bi-ideal of $S$.
\end{proof}

Now we  introduce an equivalence relation which is determined in
respect of producing same principal bi-ideals.  Let $S$ be an
ordered semigroup. Define a relation $\beta$ on $S$ by:

$$\textrm{For} \;a, b \in S, \;a\beta b \Leftrightarrow \;B(a)= B(b).$$

It requires only routine verification to see that  $\beta$ is an
equivalence relation.
\begin{Lemma}\label{H beta}
The following conditions hold in an ordered semigroup $S$:
\begin{enumerate}
  \item \vspace{-.4cm}
 $\beta \subseteq \hc$.
 \item \vspace{-.4cm}
 If $S$ is regular then $\beta= \hc$.
 \end{enumerate}
\end{Lemma}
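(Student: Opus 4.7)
My plan is to handle the two parts essentially independently, with part (1) via a direct case analysis on the description of the principal bi-ideal and part (2) as an almost immediate consequence of Lemma \ref{.1}(1).

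For part (1), I would begin by unpacking what $a\beta b$ says. From $B(a)=B(b)$ we get $a\in B(b) = (b\cup b^2\cup bSb]$, so one of three things happens: $a\leq b$, $a\leq b^2$, or $a\leq bxb$ for some $x\in S$. The key observation is that in all three cases the right-hand element lies in $L(b)=(b\cup Sb]$ (trivially for $b$; since $b^2=b\cdot b\in Sb$; and since $bxb=(bx)b\in Sb$), and symmetrically in $R(b)$. Because $L(b)$ is closed downward, $a\in L(b)$, which together with the left-ideal property gives $Sa\subseteq L(b)$ and hence $L(a)\subseteq L(b)$. The symmetric argument using $b\in B(a)$ yields $L(b)\subseteq L(a)$, so $a\lc b$. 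The same reasoning with $R$ in place of $L$ gives $a\rc b$, whence $a\hc b$.

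For part (2), assuming $S$ is regular, the reverse inclusion $\hc\subseteq\beta$ is almost a one-liner: if $a\hc b$ then $L(a)=L(b)$ and $R(a)=R(b)$, so Lemma \ref{.1}(1) gives
\[
B(a) = (R(a)L(a)] = (R(b)L(b)] = B(b),
\]
i.e., $a\beta b$. Combined with part (1), this yields $\beta=\hc$.

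The only mildly delicate point is the case analysis in part (1), where one must be sure that each of the three generators of $B(b)$ individually sits inside $L(b)$ (and inside $R(b)$), and that having $a$ below such an element, combined with the down-closure and absorption properties of a left ideal, really does force $L(a)\subseteq L(b)$. There is no genuine obstacle here; regularity is not needed for part (1), which is why the statement separates the two parts.
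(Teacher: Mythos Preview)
Your proof is correct and follows essentially the same line as the paper's. For part~(1) you spell out the case analysis the paper dismisses as ``obvious,'' and for part~(2) you invoke Lemma~\ref{.1}(1) to get $B(a)=(R(a)L(a)]$ directly, whereas the paper takes one extra step through Theorem~\ref{1} to rewrite this as $B(a)=R(a)\cap L(a)$; either form immediately gives $B(a)=B(b)$ from $a\hc b$.
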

\begin{proof}
$(1)$  This is obvious.

$(2)$ First suppose that $S$ is regular. Let $a, b \in S$ be such
that $a \hc b$. Then $a \lc b$ and $a \rc b$ implies that
$L(a)=L(b)$ and $R(a)= R(b)$. Since $S$ is regular $B(a)= L(a)\cap
R(a)$, by Lemma \ref{.1} and Theorem \ref{1}. Thus $B(a) = L(b) \cap
R(b)= B(b) $, and so $a \beta b$. Hence by $(1)$ $\beta= \hc$.
\end{proof}

\begin{Theorem}\label{MB4}
Let $S$ be an ordered semigroup. Then every bi-ideal is a union of
$\beta$-class.
\end{Theorem}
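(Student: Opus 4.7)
The plan is to prove the claim by showing that for each $a \in B$, the entire $\beta$-class of $a$ is contained in $B$; once this is established, $B = \bigcup_{a \in B}[a]_\beta$ and the conclusion follows.

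First I would pick any $a \in B$ and verify the key containment $B(a) \subseteq B$. Using the explicit description $B(a) = (a \cup a^2 \cup aSa]$ recalled in the preliminaries, each of the three generating pieces lies in $B$: clearly $a \in B$; since $B$ is a subsemigroup, $a^2 \in B$; and since $B$ is a bi-ideal, $aSa \subseteq BSB \subseteq B$. Hence $a \cup a^2 \cup aSa \subseteq B$, and because $(B] = B$ we obtain $B(a) = (a \cup a^2 \cup aSa] \subseteq (B] = B$.

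Next I would use this containment to absorb the $\beta$-class. If $b \in S$ satisfies $a \beta b$, then by definition $B(b) = B(a) \subseteq B$, and since $b \in B(b)$ trivially, we conclude $b \in B$. Therefore $[a]_\beta \subseteq B$ for every $a \in B$, which gives $B = \bigcup_{a \in B}[a]_\beta$ as desired.

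There is no serious obstacle here; the only delicate point is making sure one uses the correct formula for $B(a)$ (in an arbitrary, not necessarily regular, ordered semigroup one needs the three-term form $(a \cup a^2 \cup aSa]$ rather than just $(aSa]$), and then invoking each bi-ideal axiom in turn to see that $B$ swallows this set.
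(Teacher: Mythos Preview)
Your proof is correct and follows essentially the same route as the paper: show that for any element $b$ of a bi-ideal $B$, the whole $\beta$-class of $b$ lies in $B$ by using $B(b)\subseteq B$ together with the definition of $\beta$. The only difference is cosmetic---the paper simply asserts $B(b)\subseteq B$, whereas you spell out why this containment holds via the explicit description $B(b)=(b\cup b^{2}\cup bSb]$ and the bi-ideal axioms.
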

\begin{proof}
Let $B$ be a bi-ideal of $S$ and $b \in B$. Let $a \in S$ be such
that $a \beta b$. Then $B(a)= B(b) \subseteq B$ implies that $a \in
B$. Thus the results follows.

\end{proof}

\begin{Theorem}\label{MB5}
Let $S$ be an ordered semigroup. A bi-ideal $B$ of $S$ is minimal if
and only if it is a $\beta$-class.
\end{Theorem}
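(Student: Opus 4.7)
The plan is to derive this directly from the two results immediately preceding it, namely Lemma \ref{MB3} (minimality of $B$ is equivalent to $B(a)=B(b)$ for all $a,b\in B$) and Theorem \ref{MB4} (every bi-ideal is a union of $\beta$-classes). The statement essentially packages these two facts together in terms of the equivalence relation $\beta$, so no new technical machinery should be needed.

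For the forward direction, I would assume $B$ is a minimal bi-ideal and pick any $b\in B$. By Lemma \ref{MB3}, every $a\in B$ satisfies $B(a)=B(b)$, so $a\,\beta\,b$; hence $B$ is entirely contained in the $\beta$-class of $b$. Conversely, by Theorem \ref{MB4}, $B$ is a union of $\beta$-classes and therefore must contain the whole $\beta$-class of $b$. Combining the two inclusions gives $B=[b]_\beta$, so $B$ is a single $\beta$-class.

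For the backward direction, I would assume $B$ is a bi-ideal that coincides with some $\beta$-class. Then any two elements $a,b\in B$ satisfy $a\,\beta\,b$, which by definition of $\beta$ means $B(a)=B(b)$. Invoking Lemma \ref{MB3} immediately yields that $B$ is a minimal bi-ideal.

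There is no real obstacle here; the only tiny point to be careful about is to invoke Theorem \ref{MB4} in the forward direction so as to rule out $B$ being a proper subset of a $\beta$-class. Once that is flagged, the argument is just a two-line application of each of the preceding results.
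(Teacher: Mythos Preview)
Your proposal is correct and follows essentially the same approach as the paper: both directions rest on Lemma~\ref{MB3}, with the forward direction showing all elements of $B$ are $\beta$-related and the backward direction reducing to the principal-bi-ideal criterion. If anything, your forward direction is cleaner, since you explicitly invoke Theorem~\ref{MB4} to rule out $B$ being a proper subset of a $\beta$-class, a point the paper's proof handles somewhat loosely.
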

\begin{proof}
First suppose that $B$ is a minimal bi-ideal of $S$. Let $a, b \in
B$. Then by the minimality of $B$ it follows from Lemma \ref{MB3}
that $B(a)=B(b)$, and this implies that $a\beta b$. Therefore $B$ is
a $\beta$-class.

Conversely, assume that a bi-ideal $B$ of $S$ is a $\beta$-class.
Choose a bi-ideal $K$ of $S$ such that $K \subseteq B$. Let $x\in B$
be arbitrary. Consider $y\in K$.  Then   $B(x)= B(y)$, since $x,y
\in B$. Therefore $x \in B(y) \subseteq K$ which implies $K=B$.
Hence $B $ is contained in a $\beta-$class and hence by Theorem
\ref{MB.1}, $B$ is a $\beta-$class.
\end{proof}

Immediately  we have the following corollary. It requires only
routine verification and so its proof is omitted.
\begin{Corollary}\label{MB15}
Let $B$ be a bi-ideal of a regular ordered  semigroup $S$. Then $B$
is minimal bi-ideal of $S$ if and only if $B$ is an $\hc$-class of
$S$.
\end{Corollary}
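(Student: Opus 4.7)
The plan is to derive this corollary by threading together two results already established in the paper: Theorem \ref{MB5}, which characterises minimal bi-ideals as $\beta$-classes in arbitrary ordered semigroups, and Lemma \ref{H beta}(2), which identifies $\beta$ with $\hc$ whenever $S$ is regular. Since both statements are at hand, no new construction is required; the corollary collapses into a straight composition of equivalences.

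For the forward direction I would assume that $B$ is a minimal bi-ideal of the regular ordered semigroup $S$. Theorem \ref{MB5} then guarantees that $B$ is exactly a $\beta$-class of $S$. Because $S$ is regular, Lemma \ref{H beta}(2) supplies the equality $\beta = \hc$, and so the same subset $B$ is also an $\hc$-class. Conversely, starting from a bi-ideal $B$ that coincides with some $\hc$-class, regularity again lets me invoke $\beta = \hc$ from Lemma \ref{H beta}(2), which turns $B$ into a $\beta$-class, and then a single appeal to Theorem \ref{MB5} returns minimality of $B$.

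The main obstacle, such as it is, is purely bookkeeping rather than argument: I need to be sure that regularity is used in \emph{both} directions so that the identification $\beta = \hc$ is available, and that ``$\hc$-class'' and ``$\beta$-class'' are being read as the same partition of $S$, not merely as classes of matching size. Once these two observations are made explicit, the two equivalences slot together with no further calculation, which is presumably why the author signalled the verification as routine and omitted it.
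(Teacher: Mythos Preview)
Your proposal is correct and matches the paper's intended argument: the corollary is stated immediately after Theorem~\ref{MB5} with the remark that the verification is routine, and the routine verification is precisely the composition of Theorem~\ref{MB5} with Lemma~\ref{H beta}(2) that you describe. There is nothing to add.
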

Let us consider the following example of \cite{ke98}.
\begin{Example}
Let $S = \{a, b, c, d, e \}$ be the ordered semigroup defined by the
multiplication and the order below:

\begin{center}\
\begin{tabular}{|l|l|l|l|l|l|l}
\hline
.& a &b &c &d &e\\
\hline
a &a &a &c &a &c \\
\hline
b &a &a &c &a &c \\
\hline
c &a &a &c &a &c \\
\hline
d &d &d &e &d &e\\
\hline
e &d &d &e &d &e\\
\hline
\end{tabular}
\end{center}
Define a relation $\leq$ on S as follows: $$\leq := \{(a, a), (a,
b), (a, c), (a, d), (a, e), (b, b), (b, c), (b, d), (b, e), (c, c),
(c, e), (d, d), (d, e), (e, e)\}.$$
\end{Example}
In this  example $(S, ., \leq)$ is an ordered semigroup. And in $S$,
$\{a,b \}$ and $\{a,b,c\}$ are bi-ideals of $S$. This shows that
$\{a,b,c \}$ is not a minimal bi-ideal of $S$. It is interesting to
note that $S$ is not regular and so not a complete regular. We now
see that every bi-ideal in a complete regular ordered semigroup is
bi-ideal.

\begin{Theorem}\label{MB31}
Let $S$ be a completely  regular  ordered semigroup. Then a bi-ideal
$B$ of $S$ is minimal if and only if $B(a)= B(e)$ for all $a \in B$
and $e\in E_\leq(B)$.
\end{Theorem}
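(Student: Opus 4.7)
The forward direction is essentially a restriction of Lemma \ref{MB3}: if $B$ is minimal then $B(x) = B(y)$ for every pair $x,y \in B$, and since $E_\leq(B) \subseteq B$, the stated equality $B(a) = B(e)$ is a special case. The real content lies in the converse.

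For the converse, the plan is to establish $B(a) = B(b)$ for all $a, b \in B$ and then apply Lemma \ref{MB3} to obtain minimality. The key intermediate claim is that for every $a \in B$ there exists an ordered idempotent $e \in E_\leq(B)$ with $B(a) = B(e)$; once this is in place, I would fix any $e_0 \in E_\leq(B)$ and use the hypothesis to conclude $B(b) = B(e_0)$ for every $b \in B$, so that $B(a) = B(e_0) = B(b)$ for all $a,b \in B$.

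To verify the intermediate claim, fix $a \in B$. By Lemma \ref{cr12}(1)--(3), the element $e := ah$, where $h = a^2 t a^2 t a^2 t a^2$ and $t$ is the witness to complete regularity of $a$, lies in $E_\leq(S)$ and satisfies $a \hc e$. Since completely regular implies regular, Lemma \ref{H beta}(2) gives $\hc = \beta$, and therefore $B(a) = B(e)$. Because $a \in B$ and $B$ is a bi-ideal, the principal bi-ideal $B(a)$ is contained in $B$; hence $e \in B(e) = B(a) \subseteq B$, so in fact $e \in E_\leq(B)$, as required.

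The main obstacle is this containment $e \in B$: a priori the ordered idempotent $ah$ manufactured by Lemma \ref{cr12} is known only to live in $S$, not in $B$. The mechanism that forces it into $B$ is the marriage of the identification $\beta = \hc$ on regular ordered semigroups (Lemma \ref{H beta}) with the absorptive property $B(a) \subseteq B$ for $a$ in a bi-ideal $B$. Everything else is routine once that step is secured.
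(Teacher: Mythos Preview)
Your proof is correct and follows essentially the same route as the paper. Both arguments hinge on the same key step in the converse: for any $b\in B$, complete regularity yields an ordered idempotent $bh$ with $b\,\hc\,bh$ (Lemma~\ref{cr12}), and since $\hc=\beta$ in a regular ordered semigroup this gives $B(b)=B(bh)$, whence $bh\in B(bh)=B(b)\subseteq B$ so that $bh\in E_\leq(B)$. The only cosmetic difference is that you finish by invoking Lemma~\ref{MB3} after concluding $B(a)=B(b)$ for all $a,b\in B$, whereas the paper verifies minimality directly from the definition; the substance is the same.
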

\begin{proof}
First suppose that $B$ is minimal. Let $a\in B$ and $e \in
E_\leq(B)$.  Then $a,e \in B$. Since $S$ is regular we have $B(a)=
B(e)$, by Lemma \ref{MB3}.

Conversely, assume that the given conditions hold in $S$. Let $K$ be
a bi-ideal of $S$ such that $K\subseteq B$. Let $z \in K$ and $b \in
B$. Since $S$ is completely regular there is $h \in S$ be such that
$b \hc bh$, by Lemma \ref{cr12}(2). So $B=B(b)=B(bh)$. Also from the
proof of  Lemma \ref{cr12}(3) we have $bh \in E_\leq(S)$, infact $bh
\in E_\leq(B)$. So by given condition $B(z)= B(bh)$. Therefore $z
\in B(z)= B$, and so $K=B$. Hence $B$  is  minimal  bi-ideal  of
$S$.
\end{proof}

\begin{Corollary}\label{MB32}
Let $S$ be a completely  regular  ordered semigroup. Then a bi-ideal
$B$ of $S$ is minimal if and only if $B(e)= B(f)$ for all $e, f\in
E_\leq(B)$.
\end{Corollary}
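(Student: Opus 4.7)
The plan is to derive both directions from Theorem \ref{MB31} together with the $\hc$-class-to-idempotent reduction supplied by Lemma \ref{cr12}.

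The forward direction is essentially immediate: if $B$ is minimal, Theorem \ref{MB31} yields $B(a)=B(e)$ for every $a\in B$ and every $e\in E_\leq(B)$, so in particular $B(e)=B(f)$ for any two ordered idempotents of $B$.

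For the converse, assuming $B(e)=B(f)$ for all $e,f\in E_\leq(B)$, the strategy is to invoke Theorem \ref{MB31} and thus to verify $B(a)=B(e)$ for an arbitrary $a\in B$ and $e\in E_\leq(B)$. I would replace $a$ by a suitable ordered idempotent in its $\hc$-class: by Lemma \ref{cr12}(2)--(3) there is $h\in S$ with $a\hc ah$ and $ah\in E_\leq(S)$. Since $S$ is regular, Lemma \ref{H beta} gives $\hc=\beta$, hence $B(a)=B(ah)$, and combining this with the hypothesis $B(ah)=B(e)$ delivers $B(a)=B(e)$ as required.

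The one step that needs care, and is the main obstacle, is to check that $ah$ actually lies in $B$ rather than merely in $S$, so that it belongs to $E_\leq(B)$. This uses the explicit form $h=a^{2}ta^{2}ta^{2}ta^{2}$ coming from the proof of Lemma \ref{cr12}: rewriting $ah=a^{3}\cdot(ta^{2}ta^{2}t)\cdot a^{2}$, both $a^{3}$ and $a^{2}$ belong to $B$ since $B$ is a subsemigroup, and the bi-ideal condition $BSB\subseteq B$ then forces $ah\in B$, so $ah\in E_\leq(B)$. With this in hand the hypothesis applies to $ah$ and the argument closes.
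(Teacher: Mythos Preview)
Your proposal is correct and is precisely the argument the paper has in mind: the paper's own proof is simply ``This is obvious,'' and your write-up spells out the intended derivation from Theorem~\ref{MB31}, using the idempotent $ah\in E_\leq(B)$ supplied by Lemma~\ref{cr12} (the same device already used in the converse of Theorem~\ref{MB31}). Your verification that $ah\in B$ via $ah=a^{3}(ta^{2}ta^{2}t)a^{2}\in BSB\subseteq B$ is exactly the point that makes the corollary immediate.
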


\begin{proof}
This is obvious.
\end{proof}
Next we discuss about the bi-simplicity of an ordered semigroup.
\begin{Definition}
An ordered semigroup $S$ is called bi-simple if $S$ has no proper
bi-ideal.
\end{Definition}

In the following theorem bi-simplicity of regular ordered semigroup
has been described by its any  two ordered idempotents.
\begin{Theorem}
Let $S$ be a regular ordered semigroup. Then $S$ is bi-simple if and
only if for every $e, f \in E_\leq(S), B(e)= B(f)$.
\end{Theorem}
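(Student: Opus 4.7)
The plan is to handle the two directions separately, with most of the work concentrated in the sufficiency.

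For necessity, I would argue directly from the definition. If $S$ is bi-simple, the only bi-ideal of $S$ is $S$ itself. Since $S$ is regular, for any $a \in S$ there is $s$ with $a \leq asa$, so $as \leq asas$ shows $E_\leq(S)$ is nonempty. For each $e \in E_\leq(S)$, the principal bi-ideal $B(e)$ is a bi-ideal of $S$, hence $B(e) = S$. The same conclusion holds for any $f \in E_\leq(S)$, so $B(e) = B(f) = S$.

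For sufficiency, assume $B(e) = B(f)$ for all $e, f \in E_\leq(S)$, and let $B$ be an arbitrary bi-ideal of $S$. I need to show $B = S$. Pick any $b \in B$ and any $a \in S$; the target is $a \in B$. By regularity there exist $s_1, s_2 \in S$ with $a \leq a s_1 a$ and $b \leq b s_2 b$, and then $as_1,\ s_1 a,\ bs_2,\ s_2 b$ are all ordered idempotents (each satisfies $x \leq x^2$ by compatibility of the order with multiplication). Since $S$ is regular, the formula $B(x) = (xSx]$ applies, and the hypothesis gives $B(as_1) = B(bs_2)$ and $B(s_1 a) = B(s_2 b)$. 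In particular $as_1 \in (bs_2 \, S \, bs_2]$ and $s_1 a \in (s_2 b \, S \, s_2 b]$, so there exist $u, v \in S$ with $as_1 \leq bs_2 u bs_2$ and $s_1 a \leq s_2 b v s_2 b$.

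The key step is now to sandwich $a$ between two copies of $b$. Starting from $a \leq a s_1 a$ and applying compatibility once more gives $a \leq (a s_1)\, a\, (s_1 a)$; substituting the two inequalities above yields
\[
a \;\leq\; (bs_2 u bs_2)\, a\, (s_2 b v s_2 b) \;=\; b \cdot w \cdot b,
\]
where $w = s_2 u b s_2 a s_2 b v s_2 \in S$. Since $b \in B$ and $B$ is a bi-ideal, $bwb \in BSB \subseteq B$, and $(B] = B$ forces $a \in B$. This gives $B = S$, so $S$ is bi-simple. The main obstacle is precisely this substitution step: one needs to chain $a \leq a s_1 a$ with itself to expose both factors $as_1$ and $s_1 a$ simultaneously, so that the two applications of the hypothesis produce the required $bSb$-expression and the bi-ideal property $BSB \subseteq B$ can close the argument.
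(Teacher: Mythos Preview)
Your proof is correct. Both you and the paper set up the sufficiency identically: pick $b\in B$ and $a\in S$, use regularity to produce the four ordered idempotents $as_1,\,s_1a,\,bs_2,\,s_2b$, and invoke the hypothesis to equate their principal bi-ideals. From that point the paper routes through Green's relations: it uses Lemma~\ref{H beta} (that $\beta=\hc$ in the regular case) to convert $B(as_1)=B(bs_2)$ and $B(s_1a)=B(s_2b)$ into $as_1\,\rc\,bs_2$ and $s_1a\,\lc\,s_2b$, then deduces $a\,\hc\,b$ and concludes $a\in B(a)=B(b)\subseteq B$. Your argument is more elementary: you read off the explicit inequalities $as_1\le bs_2ubs_2$ and $s_1a\le s_2bvs_2b$ directly from $B(x)=(xSx]$ and substitute them into the iterated inequality $a\le (as_1)a(s_1a)$ to exhibit $a\le bwb\in BSB\subseteq B$ outright. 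This bypasses Green's relations and Lemma~\ref{H beta} entirely; the only extra step is the second application of $a\le as_1a$ needed to expose both factors $as_1$ and $s_1a$ simultaneously.
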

\begin{proof}
Suppose that  $S$ is bi-simple. Consider $e, f \in E_\leq(S)$.
Clearly $B(e)= S= B(f)$.

Conversely, assume that the given condition hold in $S$ and  choose
a bi-ideal $B $ of $S$. Let  $a \in S$ and $b \in B$. Since $S$ is
regular there are $x,y \in S $ such that $a \leq axa$ and $b \leq
byb$. Clearly $ax,xa,by \;\textrm{and} \;yb\in E_\leq(S)$. Now by
given condition we have $B(ax)=B(by) \;\textrm{and} \;B(xa)=B(yb)$.
This yields that $ax \hc by  \;\textrm{and} \;xa \hc yb$, in
otherwords $ax \rc by  \;\textrm{and} \;xa \lc yb$.  Now $ax \rc by
$ gives $ax \leq byz \;\textrm{and} \;by \leq axw$ for some $z,w \in
S$. So from $a \leq axa$ and $b\leq byb$ we have $a\leq byza$ and $b
\leq axwb$, which gives that $a \rc b$. In like manner $a \lc b$
follows from $yb \lc xa$. Thus $a \hc b$ and so $B(a)= B(b)$, by
Lemma \ref{H beta}. So $a \in B(b)=B$. Hence $S=B$. This completes
the proof.
\end{proof}
Every minimal bi-ideal is a bi-simple ordered semigroup. It is
interesting to note down that there are bi-ideals which are neither
left ideal nor a right ideal, but an ordered semigroup $S$ is
bi-simple if and only if it is both left and right simple. Thus we
have the following theorem.
\begin{Theorem}\label{bi simple}
The following conditions are equivalent  on an ordered semigroup
$S$:
\begin{enumerate}
  \item \vspace{-.4cm}
 $S$ is bi-simple;
 \item \vspace{-.4cm}
  $S$ is t-simple ordered semigroup;
  \item \vspace{-.4cm}
For every $a \in S, \;S=B(a) $;
\item \vspace{-.4cm}
For every $a \in S, \;S=L(a) \;\textrm{and} \;S=R(a)$.
 \end{enumerate}
\end{Theorem}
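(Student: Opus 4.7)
The plan is to prove the four conditions equivalent through the cyclic chain $(1) \Rightarrow (2) \Rightarrow (4) \Rightarrow (3) \Rightarrow (1)$, leaning on earlier results for most of the work. The equivalence $(1) \Leftrightarrow (2)$ is essentially a restatement of Theorem \ref{0}, since bi-simplicity is defined exactly as the absence of proper bi-ideals and Theorem \ref{0} identifies this with t-simplicity. The implication $(2) \Rightarrow (4)$ only unfolds definitions: t-simplicity means both left and right simplicity, and since $L(a)$ is a left ideal and $R(a)$ a right ideal, each containing $a$, when $S$ admits no proper one-sided ideals both must coincide with $S$.

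The actual content lies in $(4) \Rightarrow (3)$. Given $a \in S$, I want to prove $B(a) = S$, where $B(a) = (a \cup a^2 \cup aSa]$ by the description recalled in the Preliminaries. Taking an arbitrary $b \in S = R(a) = (a \cup aS]$, I would split on whether $b \leq a$ or $b \leq as$ for some $s \in S$. In the first sub-case $b \in (a] \subseteq B(a)$ directly. In the second, I would apply $L(a) = S = (a \cup Sa]$ to $s$ to decompose it further as $s \leq a$ or $s \leq ua$ for some $u \in S$; monotonicity of the multiplication then yields $b \leq as \leq a^2$ or $b \leq as \leq aua \in aSa$, respectively. Either way $b$ lands in $B(a)$, so $B(a) = S$.

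Closing the cycle, $(3) \Rightarrow (1)$ is immediate: any bi-ideal $B$ contains some element $a$, whence $S = B(a) \subseteq B$, forcing $B = S$, so $S$ is bi-simple. The only real obstacle is the nested case-split in $(4) \Rightarrow (3)$, where both $R(a) = S$ and $L(a) = S$ must be invoked in succession and the monotonicity of the order is used to collapse the resulting products into $a^2$ or an element of $aSa$; everything else reduces to citing Theorem \ref{0} or transliterating definitions.
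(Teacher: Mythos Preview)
Your argument is correct. The cycle $(1)\Rightarrow(2)\Rightarrow(4)\Rightarrow(3)\Rightarrow(1)$ closes, and your case analysis in $(4)\Rightarrow(3)$ is sound: the successive use of $R(a)=S$ and then $L(a)=S$ (applied to the factor $s$), together with compatibility of $\leq$ with multiplication, lands every element in $(a\cup a^2\cup aSa]=B(a)$.

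The paper runs the cycle in the opposite order, $(1)\Rightarrow(2)\Rightarrow(3)\Rightarrow(4)\Rightarrow(1)$, and puts the substantive work into $(4)\Rightarrow(1)$ rather than $(4)\Rightarrow(3)$. There the author takes a bi-ideal $B$, picks $a\in B$, and applies condition $(4)$ to $a^{2}$ instead of to $a$: from $x\in L(a^{2})$ one gets $x\le sa^{2}$, and then from $sa\in R(a^{2})$ one gets $sa\le a^{2}t$, whence $x\le a^{2}ta=a(at)a\in BSB\subseteq B$. Passing to $a^{2}$ is a small trick that sidesteps your nested case split and delivers an element of $aSa$ in one stroke; your version trades that slickness for a more transparent unpacking of the definitions of $L(a)$, $R(a)$, and $B(a)$. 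Either route is perfectly adequate here.
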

\begin{proof}
$(1)\Rightarrow (2)$: This is obvious.

$(2)\Rightarrow (3)$: This implication follows from Theorem \ref{0}.

$(3)\Rightarrow (4)$: This follows from from the fact that every
left ideal and every right ideal are bi-ideals.

$(4)\Rightarrow (1)$: Let the given conditions hold in $S$. Consider
a bi-ideal $B$ of $S$. Let $a \in B$. Then $B(a)= B$. Now $S=L(a^2)$
and $S= R(a^2)$, by condition (4). Let $x \in S$. Then $x \in
L(a)\cap R(a)$. This implies that $x \leq sa^2$  for some $s\in
S^1$. Since $sa \in S$ there is $t \in S^1$ such that $sa \leq
a^2t$. Thus $x \leq sa^2$ implies that $x \leq a^2ta=a(at)a$. Since
$a(at)a \in BSB$ and $B$ is a bi-ideal of $S$ we have that $a^2ta
\in B$. Thus $x \in B$, and hence $S=B$. This shows that $S$ is
bi-simple.
\end{proof}

\begin{Theorem}\label{mini bi}
Let $ S$ be an ordered semigroup. Then a bi-ideal  $B$ is minimal if
and only if it is bi-simple.
\end{Theorem}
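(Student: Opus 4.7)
The plan is to reduce this equivalence to a concatenation of two earlier results, namely Theorem \ref{t simple} (minimal bi-ideal $\Leftrightarrow$ t-simple) and Theorem \ref{bi simple} (bi-simple $\Leftrightarrow$ t-simple for an ordered semigroup). Chaining these two equivalences, applied to $B$ regarded as an ordered semigroup in its own right, gives the claim immediately in both directions.

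Concretely, I would first make the conceptual point that a bi-ideal $B$ of $S$ inherits a multiplication and an order from $S$, so it is itself an ordered semigroup; the hypothesis ``$B$ is bi-simple'' refers to this induced ordered semigroup having no proper bi-ideals. This is what makes Theorem \ref{bi simple} applicable to $B$. For the forward direction, assume $B$ is a minimal bi-ideal of $S$. Then Theorem \ref{t simple} yields that $B$ is t-simple. Applying Theorem \ref{bi simple} to the ordered semigroup $B$ (the implication $(2)\Rightarrow(1)$), we conclude that $B$ is bi-simple.

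For the converse, suppose $B$ is bi-simple. Then Theorem \ref{bi simple} (implication $(1)\Rightarrow(2)$, again applied to $B$) gives that $B$ is t-simple. Invoking Theorem \ref{t simple} in the reverse direction, we deduce that $B$ is a minimal bi-ideal of $S$.

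Since both required equivalences are already established, there is no genuine technical obstacle to this argument. The only point deserving a line of care is the shift of viewpoint at the start: one is reading ``bi-simplicity of $B$'' as a statement internal to $B$, while ``minimality of $B$'' is a statement about $B$ inside $S$. Once this is pinned down, the proof is a two-step diagram chase through the cited theorems and can be written in a few lines.
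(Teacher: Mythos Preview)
Your argument is correct: chaining Theorem~\ref{t simple} (minimal $\Leftrightarrow$ t-simple, from \cite{XM}) with Theorem~\ref{bi simple} (bi-simple $\Leftrightarrow$ t-simple, applied to the ordered subsemigroup $B$) gives the result immediately, and you identify the one point that needs care---the internal reading of bi-simplicity versus the external reading of minimality.

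The paper, however, does \emph{not} argue this way. It gives a direct element-level proof that avoids invoking Theorem~\ref{t simple} altogether: for the forward direction it takes a bi-ideal $T$ of $B$, picks $x\in T$, observes that $(xBx]$ is a bi-ideal of $S$ contained in $T\subseteq B$, and uses minimality of $B$ in $S$ to force $(xBx]=B$, hence $T=B$; the converse is handled symmetrically by showing that for any bi-ideal $Y\subseteq B$ of $S$ and $y\in Y$ one has $(yBy]=B$ by bi-simplicity, whence $B\subseteq(YSY]\subseteq Y$. In effect the paper is re-proving the content of Theorem~\ref{t simple} from scratch rather than quoting it. Your route is shorter and cleaner but leans on the external citation; the paper's route is more self-contained and makes explicit the mechanism (sets of the form $(xBx]$ are simultaneously bi-ideals of $S$ and of $B$) that bridges the internal and external notions.
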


\begin{proof}
First suppose that  $B$ is  a minimal bi-ideal of $S$. Consider a
bi-ideal $T$ of $B$. Let $x \in  T$. Then $(xBx] \subseteq  (TBT]
\subseteq T$. This  implies that $(xBx] \subseteq T \subseteq B$.
Also  $(xBx]$ is a bi-ideal of $S$, by Lemma . Then the minimality
of $B$ yields   that $(xBx] = B$,  and so $T = B$. Therefore  $B$ is
bi-simple.

Conversely assume that $B$ is bi-simple. Consider a bi-ideal  $Y$ of
$S$ such that $Y \subseteq B$. Choose $y \in Y$ arbitrarily. Then by
Lemma $(yBy]$ is a bi-ideal of $S$. Since $B$ is bi-simple,  $(yBy]
= B$. Then $B \subseteq (YBY] \subseteq (YSY] \subseteq Y$.
Therefore $Y = B$ and hence  $B$ is a minimal bi-ideal of $S$.
\end{proof}

In the next theorem we  characterize  completely regular ordered
semigroups in terms of  their bi-ideals.
\begin{Theorem}
An ordered semigroup $S$ is a completely regular ordered semigroup
if and only if the following conditions hold in $S$:

\begin{itemize}
\item[(1)]For every bi-ideal $B \;\textrm{of} \;S$ there is some $e \in
E_\leq(S)$ such that $ \;B = B(e)$.
\item[(2)]For every $x \in B, \;B(x^2)=B(e)$.
\end{itemize}
\end{Theorem}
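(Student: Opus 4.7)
The plan is to prove the two directions separately, reducing the backward direction to the principal bi-ideal $B(a)$ and then iterating inequalities to land inside $(a^2 S a^2]$.

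For the forward direction, assume $S$ is completely regular and let $B$ be any bi-ideal. Choose $a \in B$; by Lemma~\ref{cr12} there is $h \in S$ with $a \leq aha$ and $e := ah \in E_\leq(S)$ satisfying $a \hc e$. Since $S$ is regular, Lemma~\ref{H beta}(2) gives $\beta = \hc$, so $B(a) = B(e)$. The crucial step for (1) is then to upgrade this to $B = B(a)$; I would invoke Theorem~\ref{2} (the decomposition of $S$ into t-simple components) together with Theorem~\ref{MB5} to realize $B$ as a single $\beta$-class, which then forces $B = B(a) = B(e)$. Condition (2) is routine from here: for any $x \in B$, Lemma~\ref{cr12}(1) gives $x \leq x^2 t x^2$, so $x \in B(x^2)$; hence $B(x) \subseteq B(x^2)$ and the reverse inclusion is immediate, so $B(x^2) = B(x) = B = B(e)$.

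For the backward direction, assume (1) and (2) and take $a \in S$. Applying (1) to $B(a)$ produces $e \in E_\leq(S)$ with $B(a) = B(e)$, and then (2) with $x = a \in B(a)$ gives $B(a^2) = B(e) = B(a)$. Hence $a \in B(a^2) = (a^2 \cup a^4 \cup a^2 S a^2]$, so one of $a \leq a^2$, $a \leq a^4$, or $a \leq a^2 s a^2$ holds. In the first case, successive right-multiplications by $a$ give $a \leq a^2 \leq a^3 \leq a^4 \leq a^5 = a^2 \cdot a \cdot a^2 \in a^2 S a^2$; in the second case, right-multiplying $a \leq a^4$ by $a^3$ gives $a^4 \leq a^7 = a^2 \cdot a^3 \cdot a^2$, so $a \leq a^7 \in a^2 S a^2$; the third case is direct. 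In every case $a \in (a^2 S a^2]$, so $S$ is completely regular.

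The main obstacle is the forward-direction step that promotes the identity $B(a) = B(e)$ (an easy consequence of Lemma~\ref{cr12} and Lemma~\ref{H beta}) to the full equality $B = B(e)$ for an \emph{arbitrary} bi-ideal $B$. This is where the structural content of complete regularity enters nontrivially, since it requires showing that $B$ is captured by a single ordered idempotent, i.e.\ is a single $\beta$-class. Everything else in the proof is straightforward once Lemma~\ref{cr12}, Lemma~\ref{H beta}, Theorem~\ref{2}, and Theorem~\ref{MB5} are available.
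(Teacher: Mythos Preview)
Your backward direction and your derivation of $B(a)=B(e)$ track the paper's argument closely. You route through $a\hc ah$ and Lemma~\ref{H beta}, while the paper verifies the two inclusions $B(a)\subseteq B(ah)$ and $B(ah)\subseteq B(a)$ by a direct calculation using the explicit $h=a^{2}ta^{2}ta^{2}ta^{2}$ from the proof of Lemma~\ref{cr12}; these are equivalent. Your case analysis at the end of the converse is more explicit than the paper's one-line ``in either case $S$ is completely regular'', but the content is the same.

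The genuine problem is exactly the step you single out as the main obstacle: passing from $B(a)=B(e)$ to $B=B(e)$ for an \emph{arbitrary} bi-ideal $B$. Your proposal is to combine Theorem~\ref{2} with Theorem~\ref{MB5} to force $B$ to be a single $\beta$-class. That step would fail. Consider the three-element semilattice $S=\{0,a,b\}$ with $a^{2}=a$, $b^{2}=b$ and all other products equal to $0$, under the trivial order. This $S$ is completely regular, yet $S$ itself is a bi-ideal that meets three distinct $\beta$-classes and is not of the form $B(e)$ for any $e$, since $B(0)=\{0\}$, $B(a)=\{0,a\}$, $B(b)=\{0,b\}$. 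So an arbitrary bi-ideal in a completely regular ordered semigroup need not be a $\beta$-class (equivalently, need not be minimal), and condition~(1) as literally stated cannot be derived from complete regularity by this or any route. The paper's own proof does not close this gap either: after obtaining $B(a)=B(e)$ for one chosen $a\in B$ it simply writes ``Thus condition~(1) follows'', and for~(2) it asserts ``Clearly $B(x^{2})=B=B(a)$'' without argument. What both arguments in fact establish is the principal-bi-ideal statement $B(a)=B(e)=B(a^{2})$, which is precisely what the subsequent corollary requires; your recognition that the upgrade to general $B$ is where the difficulty lies is correct, but the proposed fix via Theorems~\ref{2} and~\ref{MB5} does not go through.
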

\begin{proof}
Let $S$ be a  completely regular ordered semigroup. Consider a
bi-ideal $B$ of $S$.   Choose $a \in B$. Then by the Theorem
\ref{cr12}, there is $h \in S$ such that $a \leq aha, \;a \leq a^2h$
and $a \leq ha^2$. Then $B(a)= (aSa] \subseteq (aha S a^2h]\subseteq
(ah S ah]= B(e)$, where $e =ah \in E_\leq(S)$. Also $B(e)= (eSe]=(ah
S ah]$. Now $h=a^2ta^2ta^2ta^2$, by the proof of Lemma
\ref{cr12}(1), and so $B(e)=(ah S ah] \subseteq (aSa]=B(a) $.
Therefore $B(a)= B(e)$. Thus condition $(1)$ follows.

For condition $(2)$ let $x \in B$. Clearly  $B(x^2)=B=B(a)$. So by
condition $(1)$ we have $B(x^2)= B(e)$.

Conversely, assume  that the given conditions  hold in $S$. Let  $a
\in S$. Consider the bi-ideal $B(a)$ of $S$.  Then  there is $e \in
E_\leq (S)$ be such that $B(a) = B(e)=B(a^2)$. Then $B(a)= B(a^2)=
(a^2 \cup a^4 \cup a^2 S a^2]$. Thus $a \leq a^2$ or $a \leq a^4$ or
$a \in (a^2 Sa^2]$. Hence in either case $S$ is completely regular
ordered semigroup.
\end{proof}
\begin{Corollary}
An ordered semigroup  $S$ is  completely  regular  if and only if
for every $a \in S, \;B(a)= B(a^2)$.
\end{Corollary}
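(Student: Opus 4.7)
The plan is to derive the forward direction from the preceding theorem and to handle the converse by reading off the description of $B(a^2)$ and verifying each summand forces complete regularity.

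For the forward direction, suppose $S$ is completely regular and fix $a \in S$. I would apply the preceding theorem to the principal bi-ideal $B(a)$: it produces an ordered idempotent $e \in E_\leq(S)$ with $B(a) = B(e)$, and condition~(2) of that theorem applied to $x = a \in B(a)$ then yields $B(a^2) = B(e) = B(a)$. Alternatively, one can argue directly: since $a \leq a^2 t a^2$ for some $t \in S$, both $a \in (a^2Sa^2]$ and $aSa \subseteq (a^2Sa^2]$ are immediate (the latter from $asa \leq (a^2ta^2)s(a^2ta^2) = a^2(ta^2sa^2t)a^2$), whence $B(a) \subseteq B(a^2)$; the reverse containment is automatic from $a^2 \in B(a)$.

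For the converse, assume $B(a) = B(a^2)$ for every $a \in S$. Since $a \in B(a) = B(a^2) = (a^2 \cup a^4 \cup a^2Sa^2]$, exactly one of three cases holds: $a \leq a^2$, $a \leq a^4$, or $a \in (a^2Sa^2]$. The third is what we want. In the first, monotonicity of multiplication gives $a \leq a^n$ for every $n \geq 2$ by induction, so $a \leq a^5 = a^2 \cdot a \cdot a^2 \in a^2Sa^2$. In the second, replacing each factor in $a^4 = a \cdot a \cdot a \cdot a$ one at a time by $a^4$ (using $a \leq a^4$ and monotonicity) gives $a \leq a^4 \leq a^7 \leq a^{10} \leq a^{13} \leq a^{16} = a^2 \cdot a^{12} \cdot a^2 \in a^2Sa^2$. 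In every case $a \in (a^2Sa^2]$, so $S$ is completely regular.

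The only genuine obstacle is the middle summand $a^4$ in the expansion of $B(a^2)$: one cannot just write $a^4 = a^2 \cdot a^2$ and conclude, because membership in $a^2Sa^2$ requires a central factor from $S$ rather than from $S^1$. The iterated substitution above circumvents this cleanly, and the remainder of the argument is routine bookkeeping with the bi-ideal formula for $B(a^2)$.
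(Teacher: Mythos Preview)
Your proof is correct and follows essentially the same route as the paper. The paper states this corollary without proof, as immediate from the preceding theorem; the converse of that theorem is exactly your case analysis on $a \in B(a^2) = (a^2 \cup a^4 \cup a^2Sa^2]$, where the paper simply writes ``Hence in either case $S$ is completely regular'' without spelling out why $a \leq a^2$ or $a \leq a^4$ suffices. You fill in precisely the detail the paper omits. Two minor remarks: ``exactly one of three cases holds'' should read ``at least one'', and in the $a \leq a^4$ case you can stop at $a \leq a^7 = a^2 \cdot a^3 \cdot a^2 \in a^2Sa^2$ rather than iterating to $a^{16}$.
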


\begin{Corollary}
Let  $S$ be an   completely  regular  ordered semigroup. Then every
bi-ideal of $S$ is principal bi-ideal generated by some ordered
idempotent.
\end{Corollary}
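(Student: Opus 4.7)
The plan is to read this corollary directly off condition (1) of the preceding theorem. Since $S$ is completely regular, that theorem supplies, for every bi-ideal $B$ of $S$, an ordered idempotent $e \in E_\leq(S)$ such that $B = B(e)$. By the definition of $B(e)$ recalled in the preliminaries, this already says that $B$ is the principal bi-ideal generated by an ordered idempotent, which is exactly the conclusion we want. So the body of the proof is a single invocation of that theorem.

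I would spend no further effort constructing the generator, but it is worth flagging that the $e$ supplied by the theorem automatically satisfies $e \in B(e) = B$, so in fact one may take $e \in E_\leq(B)$. This is consistent with the supply of ordered idempotents guaranteed by Lemma~\ref{cr12}(3) in any completely regular ordered semigroup.

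There is essentially no obstacle here. If one preferred an argument that did not cite the preceding theorem, the route would be to pick any $a \in B$, apply Lemma~\ref{cr12}(1) to obtain $h \in S$ with $a \leq aha$, $a \leq a^2h$ and $a \leq ha^2$, set $e = ah$ (which lies in $E_\leq(S)$ by Lemma~\ref{cr12}(3)), and verify the two inclusions $(aSa] \subseteq (ahSah]$ and $(ahSah] \subseteq (aSa]$ by substituting $a \leq aha$ and the explicit form $h = a^2ta^2ta^2ta^2$; then combining with the fact that every bi-ideal in a completely regular ordered semigroup is minimal (so $B = B(a)$ for any $a \in B$) gives $B = B(a) = B(e)$. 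But since the preceding theorem has already carried out precisely this computation, the cleanest plan is to cite it and stop.
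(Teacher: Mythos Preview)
Your proposal is correct and matches the paper's treatment: the paper gives no separate proof of this corollary, tacitly regarding it as an immediate consequence of condition~(1) of the preceding theorem, exactly as you suggest. Your side remark that the alternative direct computation would also need the minimality of bi-ideals (Corollary~\ref{cr minimal}) to pass from $B(a)=B(e)$ to $B=B(e)$ is apt, and indeed that step is glossed over in the paper's proof of condition~(1); but since you rightly choose to cite the theorem's statement rather than redo its proof, this does not affect your argument.
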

 Then we have the  following corollary which follows from Theorem \ref{MB31}.
\begin{Corollary}\label{cr minimal}
Let  $S$ be an   completely  regular  ordered semigroup. Then every
bi-ideal of $S$ is minimal.
\end{Corollary}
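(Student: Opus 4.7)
The plan is to invoke Theorem \ref{MB31}, which reduces the minimality of a bi-ideal $B$ to verifying $B(a) = B(e)$ for every $a \in B$ and every $e \in E_\leq(B)$. The task then splits into producing enough ordered idempotents inside $B$ and comparing the principal bi-ideals they generate.

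For the first half, given $a \in B$ I would apply Lemma \ref{cr12}(2) together with the explicit element $h = a^{2}ta^{2}ta^{2}ta^{2} \in S$ from the proof of Lemma \ref{cr12}(1) to obtain $ah \in E_\leq(S)$ with $a \hc ah$. The key observation is that $ah$ in fact lies in $B$: rewriting $ah = a \cdot (a^{2}ta^{2}ta^{2}ta) \cdot a$ exhibits $ah$ as an element of $aSa \subseteq BSB \subseteq B$, so $ah \in E_\leq(B)$. Lemma \ref{H beta}(2), which identifies $\hc$ with $\beta$ in the regular setting, then upgrades $a \hc ah$ to $B(a) = B(ah)$.

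For the second half, given an arbitrary $e \in E_\leq(B)$ I would run the same construction at $e$ to obtain $eh' \in E_\leq(B)$ with $B(e) = B(eh')$, and then attempt to match the two ordered idempotents $ah$ and $eh'$ in $B$. Here I would use Lemma \ref{.1}(1) together with Theorem \ref{1} to rewrite each principal bi-ideal as an intersection $R(x) \cap L(x)$, reducing the problem to comparing the principal right and left ideals generated by ordered idempotents of $B$; complete regularity of $S$, together with the strong bi-ideal closure $BSB \subseteq B$, should then force these principal ideals to coincide. Once $B(ah) = B(eh')$ is in hand, chaining gives $B(a) = B(ah) = B(eh') = B(e)$, and Theorem \ref{MB31} concludes that $B$ is minimal.

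The main obstacle is precisely this last comparison of principal bi-ideals attached to two unrelated ordered idempotents of $B$; the earlier steps are essentially mechanical, but this is the only point at which the full combination of complete regularity and the condition $BSB \subseteq B$ must be used in an essential way.
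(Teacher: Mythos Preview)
Your plan to invoke Theorem \ref{MB31} is exactly what the paper does: its entire proof of the corollary is the single line ``follows from Theorem \ref{MB31}.'' You have gone further than the paper by actually attempting to verify the hypothesis of that theorem, and you have correctly located where the difficulty sits.

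Unfortunately the gap you flag is genuine, and the route you sketch closes in a circle. After your first two steps you are left with proving $B(f)=B(g)$ for every pair of ordered idempotents $f,g\in E_\leq(B)$; but this is precisely the condition in Corollary \ref{MB32}, which that corollary declares \emph{equivalent} to the minimality of $B$. So nothing has been gained: you have traded the hypothesis of Theorem \ref{MB31} for the equivalent hypothesis of Corollary \ref{MB32}. Rewriting $B(f)=R(f)\cap L(f)$ via Lemma \ref{.1}(1) and Theorem \ref{1} does not help either, since two unrelated elements of a bi-ideal $B$ have no reason to generate the same principal one-sided ideals of the ambient $S$; the bi-ideal condition $BSB\subseteq B$ gives no control over $SB$ or $BS$.

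In fact the obstruction is intrinsic, because the corollary is false as stated. Take the two-element semilattice $S=\{0,1\}$ with $0\cdot x=x\cdot 0=0$, $1\cdot 1=1$, ordered by $0\leq 1$. One checks immediately that $(S,\cdot,\leq)$ is a completely regular ordered semigroup (each $a$ satisfies $a\leq a^{2}\cdot 1\cdot a^{2}$), yet $\{0\}$ is a proper bi-ideal contained in the bi-ideal $S$, so $S$ is not minimal. Hence neither your argument nor the paper's one-line deduction can be completed; the ``main obstacle'' you identified is not a missing trick but a genuine failure of the statement.
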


In the following theorem  completely regular ordered semigroups are
characterized by the minimality of their   bi-ideals.
\begin{Theorem}
Let  $S$ be a regular ordered semigroup. Then $S$  is  completely
regular if and only if $S$  is union of its bi-ideals.
\end{Theorem}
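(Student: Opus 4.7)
The plan is to reduce the equivalence directly to Theorem \ref{2} via the bi-ideal / t-simple correspondence already in hand. Both implications come down to re-reading a union of (minimal) bi-ideals as a union of t-simple ordered subsemigroups by means of Theorem \ref{t simple}, at which point Theorem \ref{2} finishes the job.

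For the forward direction, assume $S$ is (regular and) completely regular. The identity $S = \bigcup_{a \in S} B(a)$ holds automatically because $a \in B(a)$ for every $a$, so $S$ is trivially a union of its principal bi-ideals. The substance in this direction is that Corollary \ref{cr minimal} guarantees each $B(a)$ is in fact a minimal bi-ideal, hence (by Theorem \ref{t simple}) a t-simple ordered subsemigroup of $S$; so the displayed decomposition is genuinely one by t-simple pieces.

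For the converse, suppose $S$ is a regular ordered semigroup written as a union $S = \bigcup_\lambda B_\lambda$ of its bi-ideals. Apply Theorem \ref{t simple} to each $B_\lambda$ to conclude that $B_\lambda$ is t-simple, which realizes $S$ as a union of t-simple ordered subsemigroups; Theorem \ref{2} then forces $S$ to be completely regular.

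The main obstacle is interpretational rather than technical: the hypothesis ``$S$ is a union of its bi-ideals'' is in fact satisfied by every ordered semigroup, since $S$ itself is a bi-ideal and $\bigcup_{a \in S} B(a) = S$ in general. The content of the theorem therefore lies in understanding the bi-ideals in the covering as the minimal ones (equivalently, as the t-simple components of $S$), which is precisely what makes the reverse implication non-vacuous and consistent with the abstract. Once that reading is adopted, no delicate calculation is needed; the proof reduces to invoking Corollary \ref{cr minimal}, Theorem \ref{t simple} and Theorem \ref{2} in sequence.
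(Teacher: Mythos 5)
Your proof is correct, and your reading of the statement is the right one: as printed, the condition ``$S$ is a union of its bi-ideals'' is vacuous (indeed $S$ is itself a bi-ideal of $S$), and the paper's own converse, like its abstract, really works with \emph{minimal} bi-ideals --- you identify and prove exactly that version. Your converse coincides with the paper's in substance: each minimal bi-ideal in the covering is t-simple (you cite Theorem \ref{t simple}, while the paper chains Theorem \ref{mini bi} and Theorem \ref{bi simple} to the same effect), and Theorem \ref{2} concludes. In the forward direction you take a genuinely different and arguably cleaner route: you note that $S=\bigcup_{a\in S}B(a)$ holds trivially and that Corollary \ref{cr minimal} makes every $B(a)$ minimal, whereas the paper decomposes $S$ into $\beta$-classes, uses regularity to identify $\beta$ with $\hc$ (Lemma \ref{H beta}), and then invokes Theorem \ref{MB5} to recognize those classes as minimal bi-ideals. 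Your version bypasses the Green's-relation machinery entirely, at the cost of leaning on Corollary \ref{cr minimal} (itself a consequence of Theorem \ref{MB31}); both arguments are legitimate and of comparable length.
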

\begin{proof}
First suppose that $S$ is completely regular. Since $S$ is regular
$\hc= \beta$, by Lemma \ref{H beta}. Then by Theorem \ref{2}, $S$ is
union of $\beta-$classes and so $S$ is union of minimal bi-ideals,
by Theorem \ref{MB5}. Therefore from Corollary \ref{cr minimal}, $S$
is union of bi-ideals of $S$.

Conversely, assume that $S$ is union of its minimal bi-ideals. Then
$S$ is union of its t-simple ordered subsemigroups, by Theorem
\ref{bi simple} and Theorem \ref{mini bi}. Hence by Theorem \ref{2}
$S$ is completely regular.

\end{proof}


\begin{thebibliography}{9}

\bibitem{bh1}
 A.K. Bhuniya and  K. Hansda, \emph{Complete semilattice of ordered
semigroups}, Communicated.





\bibitem{Cao00}
Y. Cao and  X. Xu, {On minimal and maximal Left Ideals in ordered
semigroups}, \emph{Semigroup Forum}, \textbf{60(200)}, 202-207(2000)

\bibitem{Cao02}
Y. Cao,\emph{Characterizations of Regular Ordered Semigroup  by
Quasi-ideals}, {Vietnam Journal of Mathematics}, \textbf{30(3)},
239-250(2002)


\bibitem{ra}
R. A. Good and D. R. Hughes, {Associated groups for a semigroup},
\emph{Bull. Amer. Math. Soc.} \textbf{58}(1952), 624-625.


\bibitem{bh2}
K. Hansda, \emph{Bi-ideals in Clifford ordered semigroup}, accepted
for publication in Discussiones Mathematicae.



\bibitem{ke91}
N. Kehayopulu,  {Note on Green's relation in ordered semigroup} ,
\emph{Math. Japonica} \textbf{36}(1991), 211-214.


\bibitem{ke92}
N. Kehayopulu,  \emph{On completely regular poe-semigroups}, {Math.
Japonica} \textbf{37}(1992), 123-130.

\bibitem{ke921}
N. Kehayopulu,   {On  regular duo ordered semigroups}, \emph{Math.
Japonica} \textbf{37}(1992), 535-540.


\bibitem{ke98}
N. Kehayopulu,   {On  completely regular ordered semigroups},
\emph{Scinetiae Mathematicae} \textbf{1(1)}(1998), 27-32.

\bibitem{ke02}
N. Kehayopulu, J. S. Ponizovskii and M. Tsingelis, \emph{Bi-ideals
in ordered semigroups and ordered groups},{Journal of Mathematical
Sciences} \textbf{112, (4)}(2002). ISSN: 1573-8795




\bibitem{Ke2006}
N.  Kehayopulu, {Ideals and Green's relations in ordered
semigroups}, \emph{International Journal of Mathematics and
Mathematical Sciences }, \textbf{Article ID 61286}, 1-8(2006).







\bibitem{laj60}
S. Lajos, {On (m, n)-ideals of semigroups}, \emph{Abstract of Second
Hunger. Math. Congress I }, \textbf{}(1960), 42-44.

\bibitem{laj69}
S. Lajos, \emph{On the bi-ideals in Semigroups}, {Proc. Japan
Acad.}, \textbf{45(8)}(1969),710-712.  doi:10.3792/pja/1195520625

\bibitem{laj70}
S. Lajos, {Notes on semilattices of groups}, \emph{Proc. Japan
Acad.}, \textbf{46(2)}(1970), 151- 152. doi:10.3792/pja/1195520460




\bibitem{lt}
S. Lee and  S. Kang, \emph{Characterization of regular po-
semigroup}, {Commu. Korean Math. Soc}, \textbf{14(4)}(1999),
687-691.


\bibitem{Saito2}
T. Saito,  {Regular elements in an ordered semigroups},
\emph{Pacific J. Math.}, \textbf{13} (1963), 263-295.

\bibitem{Saito3}
T. Saito,  {Ordered completely regular semigroups.}, \emph{Pacific
J. Math.}, \textbf{14} (1964), 295-308.


\bibitem{stein}
O. Steinfeld, \emph{Quasi-ideals in rings and regular semigroups},
\emph{Akademiai Ki- ado, Budapest } \textbf{}(1958). doi:
10.1007/BF02572529


\bibitem{XM}
X. Xu and J. Ma, {A note on Minimal Bi-ideal in ordered semigroups},
\emph{Southeast Asian Bulletin of Mathematics}, \textbf{27},
149-154(2003).



\end{thebibliography}
\end{document}